\documentclass[12pt,leqno]{article}

\usepackage[cp1250]{inputenc}
\usepackage[OT4]{fontenc}
\usepackage{amsthm}
\usepackage{amsfonts}
\usepackage{amsmath}
\usepackage{amssymb}

%

\newcommand{\R}{\mathbb{R}}

\newcommand{\bo}{{\bf 0}}

\newcommand{\rank}{\operatorname{rank}}
\newcommand{\sgn}{\operatorname{sgn}}

\newcommand{\Aa}{\mathcal{A}}


\newtheorem{theorem} {Theorem}[section]
\newtheorem{theorem*}{Theorem}
\newtheorem{prop*} {Proposition}
\newtheorem{lemma*}{Lemma}
\newtheorem{lemma}[theorem]{Lemma}
\newtheorem{cor}[theorem]{Corollary}
\newtheorem{cor*}{Corollary}
\newtheorem{prop}[theorem] {Proposition}
\newtheorem{definition}[theorem]{Definition}
\newtheorem{definition*}{Definition}
\newtheorem{rem}[theorem]{Remark}

\theoremstyle{definition}
\newtheorem{ex}[theorem]{Example}


%

\begin{document} 
 

\begin{center}
{\Large Mappings from $\R^3$ to $\R^3$\\[1em] and signs of swallowtails}\\[1em]
by Justyna Bobowik and Zbigniew Szafraniec
\end{center}

\section{Introduction}
Let $M$ be an oriented 3-manifold. According to \cite{boardman}, for a residual
set of mappings $f\in C^\infty(M,\R^3)$, the
3-jet $j^3f$ intersects transversely the Boardman strata $S_1$, $S_{1,1}$, $S_{1,1,1}$ in $J^3(M,\R^3)$.
If that is the case then $S_{1,1,1}(f)$ is a discrete subset of $M$.

According to Morin \cite{morin}, if $j^3f$ is also transversal to $S_{1_3}$ then $S_{1,1,1}(f)=S_{1_3}(f)$
and  any $p\in S_{1,1,1}(f)$ is a swallowtail point, so that there exists a well-oriented coordinate system $x,y,z$ centered at $p$,
and a coordinate system centered at $f(p)$ such that $f$ has the form
$f_\pm(x,y,z)=(\pm xy+x^2 z+x^4,y,z)$. Hence one may associate a sign with $p\in S_{1,1,1}(f)$. A geometric definition of the sign associated with a swallowtail was originally introduced by Goryunov in \cite{goryunov}.

In this paper we give a definition of a simple swallowtail point $p\in S_{1,1,1}(f)$, and define its
sign $I(f,p)$. 
We shall show (see Theorem \ref{klasyfikacja}) that
\begin{itemize}
\item if $p$ is a simple swallowtail then $j^3f$ intersects transversely  $S_{1_3}$ at $p$,
\item $I(f,p)=+1$ (resp. $-1$) if and only if $f$ has the form $f_+$ (resp. $f_-$).
\end{itemize}

If every $p\in S_{1,1,1}(f)$ is a simple swallowtail and $S_{1,1,1}(f)$ is finite, then numbers
$\#S_{1,1,1}(f)$, $\#\{p\in S_{1,1,1}(f)\ |\ I(f,p)=\pm 1\}$
are important invariants associated
with $f$. (We refer the reader to \cite{goryunov, carmenetal}, where the
first-order invariants of stable mappings in $C^\infty(M,\R^3)$ are classified.
Articles \cite{marartari, moyaballesteros} present classification of some families
of germs from $\R^3$ to $\R^3$.)
We shall show  how to compute these numbers 
in the case where $f:\R^3\rightarrow\R^3$ is a polynomial mapping, in terms of signatures
of quadratic forms.

It is proper to add that in \cite{krzyzanowskaszafraniec} there is presented a construction
of quadratic forms whose signatures determine the number of positive and negative
cusps of a generic polynomial mapping from the plane to the plane.

\section{Preliminaries}
In this section we have compiled some basic facts on singularities of
mappings between manifolds. The best reference here is \cite{golub}.

Let $X,Y$ be finite-dimensional smooth manifolds. If $p$ is a point in $X$
then $E(p)$ denotes the local ring of germs of smooth functions at $p$.
Its maximal ideal $m(p)$ consists of germs of functions vanishing at $p$.
A map germ $f:(X,p)\rightarrow (Y,q)$ induces a homomorphism of local
rings $f^*:E(q)\rightarrow E(p)$. The local ring of $f$ at $p$ is the quotient
ring ${\cal R}_f(p)=E(p)/E(p)f^*(m(q))$.

We say that $f$ takes on a Morin singularity of type $k$ at $p$
if ${\cal R}_f(p)\cong \R[t]/(t^{k+1})$ for some $k$.

Let $J^k(X,Y)$ be the $k$-jet bundle over $X\times Y$. 
According to \cite{morin} and \cite{levine1,levine2}, if $\dim X=\dim Y$ then there
exists a submanifold $S_{1_k}$ of $J^k(X,Y)$ of codimension $k$ such that
for a smooth $f:X\rightarrow Y$  the set $S_{1_k}(f)=(j^k f)^{-1}(S_{1_k})$ consists of points
where $f$ takes on a Morin singularity of type $k$. For a residual set of mappings
$f\in C^\infty(X,Y)$, $S_{1_k}(f)$ is a submanifold of $X$ of codimension $k$.

For a mapping $f:X\rightarrow Y$, a point $p$ is in $S_1(f)$ if $f$ has corank $1$ at $p$.
A point $p\in S_1(f)$ is in $S_{1,1}(f)$ if $f|S_1(f)$ has corank $1$, assuming that
$S_1(f)$ is a submanifold.
A point $p\in S_{1,1}(f)$ is in $S_{1,1,1}(f)$ if $f|S_{1,1}(f)$ has corank $1$,
assuming that $S_{1,1}(f)$ is a submanifold.

Acording to Boardman \cite{boardman}, for a residual set of mappings $f\in C^\infty(X,Y)$,
sets $S_1(f)$, $S_{1,1}(f)$, and $S_{1,1,1}(f)$ are submanifolds.

\section{Sign of a swallowtail}

Let $f_\pm=(\pm xy+x^2z+x^4,y,z):(\R^3,\bo)\rightarrow(\R^3,\bo)$. 
\begin{ex}\label{przyklad1}
There exists an orientation reversing diffeomorphism $\phi_-=(-x,y,z):(\R^3,\bo)\rightarrow(\R^3,\bo)$ such that $f_-=f_+\circ\phi_-$.
\end{ex}

\begin{ex}
There exist an orientation preserving diffeomorphism $\phi_+=(-x,-y,z):(\R^3,\bo)\rightarrow(\R^3,\bo)$ 
and an orientation reversing diffeomorphism $\psi_-=(x,-y,z):(\R^3,\bo)\rightarrow(\R^3,\bo)$ such that
$ f_+= \psi_-\circ f_+\circ\phi_+$.
\end{ex}

Let $f:(\R^3,p_0)\rightarrow(\R^3,f(p_0))$ be a smooth mapping defined in a neighbourhood
of $p_0$, and let $J=\det [Df]$. Denote  $S_1(f)=\{p\in\R^3\ |\ \rank Df(p)=2\}$.

\begin{rem} \label{remark1}
Assume that $\rank[Df(p_0)]=2$ and the gradient $\nabla J(p_0)\neq\bo$.
Then $J(p_0)=0$  and the set $J^{-1}(0)$ of critical points of $f$ is locally a smooth surface. Moreover $J^{-1}(0)=S_1(f)$
 near $p_0$. Hence $\dim \,\operatorname{Ker}\,Df(p)=1$
along $S_1(f)$.
\end{rem}

From now on we shall suppose that assumptions of the above remark hold.
Let $K:(\R^3,p_0)\rightarrow\R^3$ be a vector field such that $K(p_0)\neq\bo$
and $Df(p)\,K(p)\equiv \bo$ for $p\in S_1(f)$, so that $K$ is in the kernel
of $Df$ along $S_1(f)$.

\begin{lemma} If $f=(f_1,f_2,f_3)$ then at least one vector product
$\nabla f_i\times \nabla f_j$ satisfies above conditions.
\end{lemma}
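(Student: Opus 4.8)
The plan is to identify the kernel direction of $Df$ explicitly as a cross product of two of the gradient rows. First I would record the elementary fact that, writing $Df$ with rows $\nabla f_1,\nabla f_2,\nabla f_3$, its action on a vector $v$ is $Df(p)\,v=(\nabla f_1\cdot v,\nabla f_2\cdot v,\nabla f_3\cdot v)$, so that $Df(p)\,v=\bo$ precisely when $v$ is orthogonal to all three gradients. Correspondingly $J=\det[Df]=\nabla f_1\cdot(\nabla f_2\times\nabla f_3)$ is the scalar triple product, so $p\in S_1(f)$, i.e.\ $J(p)=0$, means exactly that $\nabla f_1(p),\nabla f_2(p),\nabla f_3(p)$ are linearly dependent.

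Next I would exploit the hypothesis $\rank[Df(p_0)]=2$: among the three gradients at $p_0$ some two, say $\nabla f_i(p_0)$ and $\nabla f_j(p_0)$, are linearly independent. Set $K=\nabla f_i\times\nabla f_j$. Then $K(p_0)\neq\bo$, and since linear independence is an open condition, $\nabla f_i$ and $\nabla f_j$ stay independent — hence $K\neq\bo$ — throughout a neighbourhood $U$ of $p_0$.

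The core step is to verify the kernel condition along $S_1(f)$. For $p\in S_1(f)\cap U$ the three gradients are dependent, yet $\nabla f_i(p),\nabla f_j(p)$ span a $2$-plane; because $\rank[Df(p)]=2$ there (Remark \ref{remark1} gives $\dim\operatorname{Ker}Df(p)=1$, equivalently rank $2$), the remaining gradient $\nabla f_k(p)$ must lie in that span. Consequently $K=\nabla f_i\times\nabla f_j$, being orthogonal to both $\nabla f_i$ and $\nabla f_j$, is orthogonal to $\nabla f_k$ as well, so $Df(p)\,K(p)=\bo$. Thus this $K$ satisfies both required conditions.

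The only point needing care — and the closest thing to an obstacle — is confirming that the third gradient $\nabla f_k$ genuinely falls into the span of the chosen independent pair along $S_1(f)$. This is exactly where Remark \ref{remark1} enters, guaranteeing that the kernel stays one-dimensional, so that a single dependency among three vectors forces $\nabla f_k$ into the $2$-plane spanned by $\nabla f_i,\nabla f_j$. Everything else is a direct consequence of the defining orthogonality properties of the cross product.
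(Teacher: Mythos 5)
Your proposal is correct and takes essentially the same approach as the paper: pick $i<j$ with $\nabla f_i(p_0),\nabla f_j(p_0)$ linearly independent (possible since $\rank[Df(p_0)]=2$), set $K=\nabla f_i\times\nabla f_j$, and use the vanishing of $J$ along $S_1(f)$ to obtain the kernel condition. The only difference is in phrasing the last step: the paper computes the coordinates of $Df\,(\nabla f_i\times\nabla f_j)$ directly as determinants, two of which have repeated rows (hence vanish identically) while the third is $\pm J$, whereas you argue geometrically that on $S_1(f)$ the row dependence forces $\nabla f_k$ into the span of $\nabla f_i,\nabla f_j$ --- the same fact, and note that this span argument needs only the independence of the chosen pair plus $J(p)=0$, so your appeal to Remark \ref{remark1} there is harmless but not actually required.
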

\begin{proof} Since $\rank [Df(p_0)]=2$, two rows of the derivative matrix  are linearly independent.
Hence at least one $2\times 2$--minor is  non zero, and then there exist $1\leq i<j\leq 3$ with $(\nabla f_i\times\nabla f_j)(p_0)\neq\bo$.

Let $p\in S_1(f)=J^{-1}(0)$. Notice that one of the coordinates of the composition $Df(p)\, (\nabla f_i\times\nabla f_j)(p)$
equals $\pm J(p)=0$. The other two coordinates are equal to the Laplace expansion of matrices having the same two rows, so they vanish too.
So $Df(p)\, (\nabla f_i\times \nabla f_j)(p)\equiv\bo$.
\end{proof}

Put $X=\langle\nabla J,K\rangle$, where $\langle\,.\, ,\, .\,\rangle$ denotes the standard scalar product.
Denote  $S_{1,1}(f)=\{p\in S_1(f)\ |\ \rank D(f|S_1(f))(p)=1\}$.

\begin{rem}\label{remark2}
 As  $\dim \,\operatorname{Ker}\,Df=1$
along $S_1(f)$, then the set of critical points of $f|S_1(f)$ equals $S_{1,1}(f)$.
Suppose that $X(p_0)=0$ and vectors $\nabla J(p_0),\nabla X(p_0)$ are linearly independent.
Then locally 
$$S_{1,1}(f)=\{p\in S_1(f)\ |\ K(p)\in T_p S_1(f)\}=
\{p\in J^{-1}(0)\ |\ K(p)\in T_p J^{-1}(0)\}$$
$$=\{p\in J^{-1}(0)\ |\ K(p)\perp \nabla J(p)\}
=J^{-1}(0)\cap X^{-1}(0).$$ Moreover $S_{1,1}(f)$ is locally a smooth curve near $p_0$.
\end{rem}
From now on we shall suppose that assumptions of the above remark hold.
Put $Y=\langle \nabla X,K\rangle$. Denote
$S_{1,1,1}(f)=\{p\in S_{1,1}(f)\ |\ \rank D(f|S_{1,1}(f))(p)=0\}$.

\begin{rem}\label{remark3}
As  $S_{1,1}(f)$ is one-dimensional, then the set of critical points of $f|S_{1,1}(f)$ equals $S_{1,1,1}(f)$.
 Suppose that $Y(p_0)=0$.
Then locally
$$S_{1,1,1}(f)=\{p\in S_{1,1}(f)\ |\ K(p)\in T_p(S_{1,1}(f))\}$$
$$=\{p\in J^{-1}(0)\cap X^{-1}(0)\ |\ K(p)\in T_p(J^{-1}(0))\cap T_p(X^{-1}(0))\}$$
$$=\{p\in J^{-1}(0)\cap X^{-1}(0)\ |\ K(p)\perp\nabla J(p)\ ,\ K(p)\perp\nabla X(p)\}$$
$$=J^{-1}(0)\cap X^{-1}(0)\cap Y^{-1}(0).$$
\end{rem}

\begin{definition} We shall say that $p_0$ is a simple swallowtail point if  $\rank[Df(p_0)]=2$, 
 $J(p_0)=X(p_0)=Y(p_0)=0$, and vectors
$\nabla J(p_0), \nabla X(p_0), \nabla Y(p_0)$ are linearly independent. In that case we say that
the germ $f$ is a simple swallowtail
\end{definition}

If that is the case then the vectors $\nabla J(p_0),\nabla X(p_0),\nabla Y(p_0)$
are linearly independent, so  $\{p_0\}$ is an isolated point in $S_{1,1,1}(f)$.
Put $Z=\langle\nabla Y,K\rangle$. Since $X(p_0)=Y(p_0)=0$, the vector $K(p_0)$
is perpendicular to both $\nabla J(p_0)$ and $\nabla X(p_0)$.
Hence $Z(p_0)=\langle \nabla Y(p_0),K(p_0)\rangle\neq 0$.

\begin{cor}\label{star} Let $H=(J,X,Y)$.
The point $p_0$ is a simple swallowtail if and only if $\rank[Df(p_0)]=2$ and
$H$ has a simple zero at $p_0$. If that is the case then  $Z(p_0)\neq 0$. $\Box$
\end{cor}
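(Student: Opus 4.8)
The plan is to recognize that this statement is essentially a reformulation of the definition of a simple swallowtail in the language of the single map $H=(J,X,Y)$, together with a short orthogonality argument for the nonvanishing of $Z(p_0)$. First I would recall the standard meaning of a simple zero: a map germ $H:(\R^3,p_0)\rightarrow\R^3$ has a simple zero at $p_0$ precisely when $H(p_0)=\bo$ and its derivative $DH(p_0)$ is nonsingular. Since $H=(J,X,Y)$, the condition $H(p_0)=\bo$ unpacks exactly to $J(p_0)=X(p_0)=Y(p_0)=0$, and the rows of the Jacobian matrix $DH(p_0)$ are precisely the gradients $\nabla J(p_0),\nabla X(p_0),\nabla Y(p_0)$. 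Hence $DH(p_0)$ is nonsingular if and only if these three gradients are linearly independent.

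With this dictionary in place, the equivalence is immediate: under the common hypothesis $\rank[Df(p_0)]=2$, the assertion that $H$ has a simple zero at $p_0$ reads, term by term, as $J(p_0)=X(p_0)=Y(p_0)=0$ together with the linear independence of $\nabla J(p_0),\nabla X(p_0),\nabla Y(p_0)$, which is exactly the definition of $p_0$ being a simple swallowtail. So I would simply record that the two lists of conditions coincide.

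For the final assertion $Z(p_0)\neq 0$, I would repeat the reasoning already sketched in the paragraph preceding the corollary. From $X(p_0)=\langle\nabla J(p_0),K(p_0)\rangle=0$ and $Y(p_0)=\langle\nabla X(p_0),K(p_0)\rangle=0$, the nonzero vector $K(p_0)$ is orthogonal to both $\nabla J(p_0)$ and $\nabla X(p_0)$. If one had $Z(p_0)=\langle\nabla Y(p_0),K(p_0)\rangle=0$ as well, then $K(p_0)$ would be orthogonal to all three gradients; but by linear independence these span $\R^3$, forcing $K(p_0)=\bo$, contrary to $K(p_0)\neq\bo$. Therefore $Z(p_0)\neq 0$.

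I do not expect any genuine obstacle here: the content is almost entirely definitional, and the only substantive (though brief) point is the nonvanishing of $Z(p_0)$. Its proof rests on the observation that $K(p_0)$ spans the one-dimensional orthogonal complement of the plane $\langle\nabla J(p_0),\nabla X(p_0)\rangle$, and that this line is not contained in the hyperplane orthogonal to $\nabla Y(p_0)$ precisely because $\nabla Y(p_0)$ is independent of $\nabla J(p_0)$ and $\nabla X(p_0)$.
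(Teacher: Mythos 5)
Your proposal is correct and follows essentially the same route as the paper: the equivalence is the definitional dictionary between a simple zero of $H=(J,X,Y)$ and the conditions defining a simple swallowtail, and the nonvanishing of $Z(p_0)$ is exactly the paper's argument (given in the paragraph preceding the corollary) that $K(p_0)\neq\bo$ cannot be orthogonal to three linearly independent vectors in $\R^3$.
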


\begin{definition} If the germ $f:(\R^3,p_0)\rightarrow(\R^3,f(p_0))$ is a simple swallowtail then its sign, denoted by  $I(f,p_0)$, is defined to be
$I(f,p_0)=\sgn(Z(p_0)\cdot\det[DH(p_0)])$, i.e. $$I(f,p_0)=\sgn(Z(p_0))\cdot\sgn(\det[\nabla J(p_0),\nabla X(p_0),\nabla Y(p_0)]).$$
\end{definition}

\begin{ex}\label{przyklad3}      
 Let  $f=f_\pm=(\pm xy+x^2z+x^4,y,z)$,  $p_0=\bo$. Then
\[   Df=\left[\begin{array}{ccc} \pm y+2xz+4x^3 & * & *\\
0  & 1  &   0\\    0  &   0   &   1          \end{array}     \right]\ ,    \]
$$J=\pm y+2xz+4x^3\ ,\ K=(1,0,0),$$
$$X=2z+12x^2\ ,\ Y=24x\ ,\ Z=24\ .$$
Because
\[  \det [DH(\bo)]=\det  \left[  \begin{array}{rrr}  0 & \pm 1 & 0\\ 0 & 0 & 2\\ 24 & 0 & 0     \end{array}    \right]  = \pm 48\ ,   \]
then  both $f_\pm$ are simple swallowtails and $I(f_\pm ,\bo)=\pm 1$.\\[1em]
In the next sections we shall show that\begin{itemize}
\item[(1)] $I(f,p_0)$ does not depend on the choice of the field $K$,
\item[(2)] if $\phi, \psi$ are germs of diffeomorphisms and $\phi$ preserves the orientation then
the sign of $f$ equals the sign of $\psi\circ f\circ\phi$,
\item[(3)] there exist germs of diffeomorphisms $\phi, \psi$ such that $\phi$ preserves the
orientation and $\psi\circ f\circ\phi=f_+$ (resp. $=f_-$) if $I(f,p_0)=+1$ (resp. $-1$).
\end{itemize}

\end{ex}

\section{On the choice of the field $K$}
In this section we shall show that the sign of a swallowtail does not depend on
the choice of the field $K$.

 Suppose that assumptions of Remark \ref{remark1} hold and
$K:(\R^3,p_0)\rightarrow\R^3$ is the vector field introduced in the previous section.

Let $K_1:(\R^3,p_0)\rightarrow\R^3$ be a vector field such 
and $Df(p)\,K_1(p)\equiv \bo$ for $p\in S_1(f)$, so that $K_1$ is in the kernel
of $Df$ along $S_1(f)$. (We do not exclude the case where $K_1(p_0)=\bo$.)
The kernel of $Df$ along $S_1(f)$ is one-dimensional, so fields $K$ and $K_1$
are collinear along $S_1(f)$, and there exists a smooth $\xi:(\R^3,p_0)\rightarrow\R$
such that $K_1(p)=\xi(p)\cdot K(p)$ for $p\in S_1(f)$. 
Since $S_1(f)=J^{-1}(0)$ is locally a complete intersection, there is $L:(\R^3,p_0)\rightarrow\R^3$
with $K_1=\xi\cdot K+J\cdot L$.

Take any smooth $\alpha:(\R^3,p_0)\rightarrow\R$, and set $J_1=\alpha\cdot J$.
Of course, $J_1(p_0)=0$. Put $X_1=\langle\nabla J_1,K_1\rangle$,
$Y_1=\langle\nabla X_1,K_1\rangle$. We have

$$X_1=\langle\nabla(\alpha\cdot J),K_1\rangle=\langle\alpha\cdot\nabla J+J\cdot\nabla\alpha\ ,\ \xi\cdot K+J\cdot L\rangle
=(\alpha\cdot\xi)\cdot X+J\cdot R_1\ ,$$  
$$Y_1=\langle\nabla((\alpha\cdot\xi)\cdot X)+\nabla(J\cdot R_1)\ ,\ K_1\rangle$$
$$=\langle  (\alpha\cdot\xi)\nabla X+X\cdot\nabla(\alpha\cdot\xi)+J\cdot\nabla(R_1)+R_1\cdot\nabla J\ ,\ \xi\cdot K+J\cdot L    \rangle$$
$$=(\alpha\cdot\xi^2)Y+R_2\cdot J+( \langle\nabla(\alpha\cdot\xi),K_1\rangle+R_1\cdot\xi            )\cdot X$$
$$=(\alpha\cdot\xi^2)Y+R_2\cdot J+R_3\cdot X\ ,$$
where $R_1,R_2,R_3:(\R^3,p_0)\rightarrow\R$. Hence $X_1(p_0)=0$, $Y_1(p_0)=0$, and
\[ \left[\begin{array}{c} J_1\\ X_1\\ Y_1  \end{array}   \right]    
\ =\ \left[ \begin{array}{ccc}\alpha & 0 & 0\\ R_1 & \alpha\cdot\xi & 0\\ R_2 & R_3 & \alpha\cdot \xi^2      \end{array}      \right]
 \left[\begin{array}{c}  J \\ X \\ Y    \end{array}    \right] .
\]

\begin{cor}\label{wwniosek1} One has $J^{-1}(0)\subset J_1^{-1}(0)$, $J^{-1}(0)\cap X^{-1}(0)\subset J_1^{-1}(0)\cap X_1^{-1}(0)$, and
$J^{-1}(0)\cap X^{-1}(0)\cap Y^{-1}(0)\subset J_1^{-1}(0)\cap X_1^{-1}(0)\cap Y_1^{-1}(0)$. $\Box$
\end{cor}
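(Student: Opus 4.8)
The plan is to read off the three inclusions directly from the triangular matrix identity established just above the statement. Expanding that identity gives the three scalar equations
\begin{align*}
J_1 &= \alpha\cdot J,\\
X_1 &= R_1\cdot J + (\alpha\cdot\xi)\cdot X,\\
Y_1 &= R_2\cdot J + R_3\cdot X + (\alpha\cdot\xi^2)\cdot Y.
\end{align*}
The key structural observation is that the transition matrix is lower triangular, so each new function $J_1, X_1, Y_1$ is a combination of the old functions involving only those of index at most its own. The three inclusions will then follow by a short cascading argument.

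First I would treat the first inclusion: if $p\in J^{-1}(0)$ then $J(p)=0$, hence $J_1(p)=\alpha(p)\,J(p)=0$, so $p\in J_1^{-1}(0)$. For the second inclusion, suppose $p\in J^{-1}(0)\cap X^{-1}(0)$, so that $J(p)=X(p)=0$; the first two equations then give $J_1(p)=0$ and $X_1(p)=R_1(p)J(p)+(\alpha\xi)(p)X(p)=0$, placing $p$ in $J_1^{-1}(0)\cap X_1^{-1}(0)$. For the third inclusion, if $p\in J^{-1}(0)\cap X^{-1}(0)\cap Y^{-1}(0)$ then $J(p)=X(p)=Y(p)=0$, and all three equations yield $J_1(p)=X_1(p)=Y_1(p)=0$, so $p$ lies in the corresponding triple intersection.

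There is essentially no obstacle here: all the genuine work—producing the functions $\xi, L, R_1, R_2, R_3$ and verifying the triangular form—was already carried out in deriving the matrix identity. The corollary is a purely formal consequence of that triangularity, reflecting geometrically that replacing the defining function $J$ by $J_1=\alpha\cdot J$ and the kernel field $K$ by a collinear field $K_1$ can only enlarge (never shrink) the successive zero loci. This is consistent with the earlier remarks identifying $S_1(f)$, $S_{1,1}(f)$, $S_{1,1,1}(f)$ with the nested intersections $J^{-1}(0)$, $J^{-1}(0)\cap X^{-1}(0)$, $J^{-1}(0)\cap X^{-1}(0)\cap Y^{-1}(0)$.
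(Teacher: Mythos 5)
Your proposal is correct and matches the paper's intent exactly: the corollary is stated with an immediate $\Box$ precisely because it is a formal consequence of the lower-triangular identity $J_1=\alpha J$, $X_1=R_1 J+(\alpha\xi)X$, $Y_1=R_2 J+R_3 X+(\alpha\xi^2)Y$ derived just above it, which is the cascading argument you give.
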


Put $H_1=(J_1,X_1,Y_1):(\R^3,p_0)\rightarrow (\R^3,\bo)$. 

\begin{lemma}
We have $H^{-1}(\bo)\subset H_1^{-1}(\bo)$ near $p_0$. If $\alpha(p_0)\xi(p_0)\neq 0$ then locally
$H^{-1}(\bo)=H_1^{-1}(\bo)$. $\Box$
\end{lemma}

Let $Z_1=\langle\nabla Y_1,K_1\rangle$.

\begin{lemma}\label{lemat1}
We have $Z_1(p_0)=\alpha(p_0)\xi^3(p_0)Z(p_0)$. 
\end{lemma}
\noindent{\em Proof.} Since $J(p_0)=X(p_0)=Y(p_0)=0$, one has
$$\nabla Y_1(p_0)=\alpha(p_0)\xi^2(p_0)\nabla Y(p_0)+R_2(p_0)\nabla J(p_0)+R_3(p_0)\nabla X(p_0).$$
Then $Z_1(p_0)=\langle\nabla Y_1(p_0),\xi(p_0)K(p_0)\rangle
=\alpha(p_0)\xi^3(p_0)Z(p_0)+R_2(p_0)\xi(p_0)X(p_0)$ $+R_3(p_0)\xi(p_0)Y(p_0)=\alpha(p_0)\xi^3(p_0)Z(p_0).\ \Box$

\begin{prop}\label{lemat2}
We have
\begin{itemize}
\item[(i)] $\nabla J_1(p_0)$ is a linear combination of $\nabla J(p_0)$,
$\nabla X_1(p_0)$ is a linear combination of $\nabla J(p_0),\nabla X(p_0)$, and $\nabla Y_1(p_0)$
is a linear combination of $\nabla J(p_0),\nabla X(p_0),\nabla Y(p_0)$,
\item[(ii)] if $\alpha(p_0)\xi(p_0)\neq 0$ then $p_0$ is a simple zero of $H$ if and only if $p_0$ is a simple zero of $H_1$. If that is the case then
$$\sgn(Z_1(p_0)\cdot \det[DH_1(p_0)])=\sgn(Z(p_0)\cdot\det[DH(p_0)]    )=I(f,p_0).$$
In particular, the definition of $I(f,p_0)$ does not depend on the choice of the vector field $K$ with $K(p_0)\neq\bo$. (In this case one may
take $\alpha\equiv 1$.)
\item[(iii)] if $\alpha(p_0)\xi(p_0)=0$ then $\sgn(Z_1(p_0)\cdot\det[DH_1(p_0)]  )=0$.
\end{itemize}
\end{prop}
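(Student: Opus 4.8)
The plan is to deduce everything from the matrix identity $H_1=M\cdot H$ displayed just before the proposition, where $M$ is the lower-triangular matrix with diagonal entries $\alpha$, $\alpha\xi$, $\alpha\xi^2$ and subdiagonal entries $R_1,R_2,R_3$. The central observation I would establish first is that differentiating this identity at $p_0$ annihilates all contributions coming from the coefficient functions. Writing $(H_1)_i=\sum_j M_{ij}H_j$ and applying the product rule gives $\nabla(H_1)_i=\sum_j H_j\,\nabla M_{ij}+\sum_j M_{ij}\,\nabla H_j$; since $J(p_0)=X(p_0)=Y(p_0)=0$, i.e.\ $H(p_0)=\bo$, the first sum vanishes at $p_0$ and hence
$$DH_1(p_0)=M(p_0)\,DH(p_0).$$
Reading off the three rows of this equation is precisely statement (i): $\nabla J_1(p_0)=\alpha(p_0)\nabla J(p_0)$, then $\nabla X_1(p_0)=R_1(p_0)\nabla J(p_0)+\alpha(p_0)\xi(p_0)\nabla X(p_0)$, and finally $\nabla Y_1(p_0)$ is the corresponding combination of all three gradients.

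For parts (ii) and (iii) I would take determinants. Since $M(p_0)$ is lower triangular, $\det[M(p_0)]=\alpha^3(p_0)\xi^3(p_0)$, so the displayed row relation yields
$$\det[DH_1(p_0)]=\alpha^3(p_0)\xi^3(p_0)\,\det[DH(p_0)].$$
In case (ii) the hypothesis $\alpha(p_0)\xi(p_0)\neq 0$ makes $M(p_0)$ invertible, so $\det[DH_1(p_0)]$ and $\det[DH(p_0)]$ vanish simultaneously; thus $H_1$ has a simple zero at $p_0$ if and only if $H$ does. Combining the determinant formula with Lemma \ref{lemat1}, which gives $Z_1(p_0)=\alpha(p_0)\xi^3(p_0)Z(p_0)$, I would compute
$$Z_1(p_0)\cdot\det[DH_1(p_0)]=\alpha^4(p_0)\xi^6(p_0)\cdot Z(p_0)\cdot\det[DH(p_0)].$$
Because $\alpha^4(p_0)\xi^6(p_0)>0$ under the hypothesis, the two sides carry the same sign, which is $I(f,p_0)$ by definition. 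The final ``in particular'' clause then follows by specializing to $\alpha\equiv 1$: if $K$ and $K_1$ both satisfy $K(p_0)\neq\bo$, then since $K_1(p_0)=\xi(p_0)K(p_0)$ on $J^{-1}(0)$ one must have $\xi(p_0)\neq 0$, so the hypothesis of (ii) is met and both fields produce the same sign.

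In case (iii) the hypothesis $\alpha(p_0)\xi(p_0)=0$ forces $\det[M(p_0)]=\alpha^3(p_0)\xi^3(p_0)=0$, and hence $\det[DH_1(p_0)]=0$ by the determinant formula above; therefore the product $Z_1(p_0)\cdot\det[DH_1(p_0)]$ is zero and its sign is $0$, as asserted.

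I do not expect a genuine obstacle here: the entire argument hinges on the single passage from the functional identity $H_1=M\cdot H$ to the pointwise derivative identity $DH_1(p_0)=M(p_0)DH(p_0)$, which is legitimate only because $p_0$ is a common zero of $J$, $X$ and $Y$, so that the gradients of the coefficients $M_{ij}$ never enter. Once that is secured, parts (i)--(iii) reduce to multiplicativity of determinants together with the sign bookkeeping that combines $\det[M(p_0)]=\alpha^3(p_0)\xi^3(p_0)$ with $Z_1(p_0)=\alpha(p_0)\xi^3(p_0)Z(p_0)$.
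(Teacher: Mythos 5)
Your proposal is correct and follows essentially the same route as the paper: both differentiate the triangular relation $H_1=M\cdot H$ at the common zero $p_0$ (so the terms $H_j\nabla M_{ij}$ drop out, giving $DH_1(p_0)=M(p_0)\,DH(p_0)$ and hence $\det[DH_1(p_0)]=\alpha^3(p_0)\xi^3(p_0)\det[DH(p_0)]$), and then combine this with Lemma \ref{lemat1} to compare the signs. Your explicit justification that $\xi(p_0)\neq 0$ whenever both $K(p_0)\neq\bo$ and $K_1(p_0)\neq\bo$ is a small but welcome addition that the paper leaves implicit.
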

\noindent{\em Proof.} We have
$$\nabla J_1(p_0)=\alpha(p_0)\nabla J(p_0),$$ 
$$\nabla X_1(p_0)=R_1(p_0)\nabla J(p_0)+       \alpha(p_0)\xi(p_0)\nabla X(p_0),$$
$$\nabla Y_1(p_0)=R_2(p_0)\nabla J(p_0)+R_3(p_0)\nabla X(p_0)+\alpha(p_0)\xi^2(p_0)\nabla Y(p_0).$$

Then $\det[DH_1(p_0)]=\det[\nabla J_1(p_0),\nabla X_1(p_0),\nabla Y_1(p_0)]$
$$= \alpha^3(p_0)\xi^3(p_0)\det[\nabla J(p_0),\nabla X(p_0),\nabla Y(p_0)]= \alpha^3(p_0)\xi^3(p_0)\det[DH(p_0)].$$

By Lemma \ref{lemat1}, if $\alpha(p_0)\xi(p_0)\neq 0$ then
$$\sgn( Z_1(p_0)\cdot \det[DH_1(p_0)]  )=\sgn(\alpha(p_0)\xi^3(p_0)Z(p_0)\cdot \alpha^3(p_0)\xi^3(p_0)\det[DH(p_0)]      )$$
$$=\sgn( Z(p_0)\cdot\det[DH(p_0)]  ).$$

If $\alpha(p_0)\xi(p_0)=0$ then $Z_1(p_0)\cdot\det[DH_1(p_0)]=0$. $\Box$\\[1em]

\section{Coordinates in the domain}

In this section we shall show that an orientation preserving change of coordinates in the domain
does not change the sign of a swallowtail.

 Suppose that the germ $f:(\R^3,p_0)\rightarrow(\R^3,f(p_0))$ is a simple swallowtail. 
Let $\phi:(\R^3,q_0)\rightarrow(\R^3,p_0)$ be a germ of a diffeomorphism.
Put $\tilde f=f\circ\phi$. Then the derivative matrix $[D\tilde f]=[Df(\phi)]\cdot[D\phi]$,
and then $\tilde J=\det[D\tilde f]=J(\phi)\cdot\det[D\phi]$.
Hence $J(\phi)=\tilde\alpha\cdot\tilde J$, where $\tilde\alpha=(\det[D\phi])^{-1}$,
$\tilde\alpha(q_0)\neq 0$.

We have $\nabla(J(\phi))=[D\phi]^T\cdot\nabla J(\phi)$ and
$\nabla J(\phi)(q_0)=\nabla J(p_0)\neq \bo$. 
Therefore $\nabla(J(\phi))(q_0)\neq \bo$.
Since $\tilde J(q_0)=0$, we have $\nabla(J(\phi))(q_0)=\tilde\alpha(q_0)\cdot\nabla\tilde J(q_0)$,
and then $\nabla\tilde J(q_0)\neq \bo$.
 
Denote $S_1(\tilde f)=\{q\in\R^n\ |\ \rank D\tilde f(q)=2\}$.
Since $\rank D\tilde f=\rank Df(\phi)$,
the set $\tilde J^{-1}(0)$   consisting of critical points of $\tilde f$ equals $\phi^{-1}(S_1(f))$, and so is a smooth surface.

Put $\tilde K=[D\phi]^{-1}\cdot K(\phi)$. Then
$$D\tilde f\cdot \tilde K=Df(\phi)\cdot D\phi\cdot [D\phi]^{-1}K(\phi)=(Df\cdot K)(\phi)\equiv \bo$$
along $S_1(\tilde f)$. Moreover $\tilde K(q_0)\neq\bo$.

Put $\tilde X=\langle\nabla\tilde J,\tilde K\rangle$ and $\tilde X_1=\langle\nabla(J(\phi)),\tilde K\rangle$. Then
$$ \tilde X_1=\langle [D\phi]^T\cdot\nabla J(\phi),[D\phi]^{-1}\cdot K(\phi)\rangle=
\langle\nabla J(\phi),K(\phi)\rangle=X(\phi)\ .      $$
Put $\tilde Y=\langle\nabla\tilde X,\tilde K\rangle$ and $\tilde Y_1=\langle\nabla\tilde X_1,\tilde K\rangle$.
Then 
$$\tilde Y_1=\langle\nabla(X(\phi)),\tilde K\rangle=\langle [D\phi]^T\cdot\nabla X(\phi),[D\phi]^{-1}\cdot K(\phi)\rangle
=\langle\nabla X(\phi),K(\phi)\rangle=Y(\phi)\ .$$

Take mappings $\tilde H=(\tilde J,\tilde X,\tilde Y)$ and $\tilde H_1=(J(\phi),X(\phi),Y(\phi))=H(\phi)$. 

Put $\tilde Z=\langle\nabla\tilde Y,\tilde K\rangle$ and $\tilde Z_1=
\langle\nabla\tilde Y_1,\tilde K\rangle$.
Then $\tilde Z_1=\langle\nabla(Y(\phi)),\tilde K\rangle=  \langle [D\phi]^T\cdot\nabla J(\phi),[D\phi]^{-1}\cdot K(\phi)\rangle= Z(\phi)$, so that $\tilde Z_1(q_0)=Z(p_0)\neq 0$.

We have $D\tilde H_1=DH(\phi)\cdot D\phi$, so that $\tilde H_1$ has a simple zero at $q_0$.
We may apply results of the previous section, where $\xi\equiv 1$, and we replace
$J$ by $\tilde J$, $\alpha$ by $\tilde\alpha$, $K$ by $\tilde K$, $X$ by $\tilde X$, $X_1$ by $\tilde X_1$, and so on.
By Proposition \ref{lemat2} (ii) and Corollary  \ref{star},  $\tilde H$ has a simple zero at $q_0$,  $\tilde Z(q_0)\neq 0$,   and
$$I(\tilde f,q_0)=\sgn (\tilde Z(q_0)\cdot\det [D\tilde H(q_0)])=\sgn(\tilde Z_1(q_0)\cdot\det [D\tilde H_1(q_0)])$$
$$=\sgn(Z(p_0)\cdot\det[DH(p_0)]\cdot \det[D\phi(q_0)])=I(f,p_0)\cdot\sgn\det [D\phi(q_0)].$$
We have got

\begin{prop}\label{fakt51}
If $\phi:(\R^3,q_0)\rightarrow(\R^3,p_0)$ is a germ of a diffeomorphism then
$f$ is a simple swallowtail if and only if $f\circ \phi$ does. If that is the case then
$$I(f\circ\phi,q_0)=I(f,p_0)\cdot\sgn\det[D\phi(q_0)]\ . \ \Box$$
\end{prop}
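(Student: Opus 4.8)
The entire statement has already been reduced to a computation in the preceding discussion, so the plan is simply to extract the relevant facts and assemble them. First I would observe that the displayed chain of equalities in the text establishes the substantive content: starting from $\tilde H=(\tilde J,\tilde X,\tilde Y)$ and the auxiliary $\tilde H_1=H(\phi)$, one already knows that $\tilde H_1$ has a simple zero at $q_0$ (from $D\tilde H_1=DH(\phi)\cdot D\phi$ together with the hypothesis that $H$ has a simple zero at $p_0$ and $\det[D\phi(q_0)]\neq 0$), and that $\tilde Z_1(q_0)=Z(p_0)\neq 0$. The key link is that this $\tilde f$-setup fits the framework of Section~4 with the substitutions $J\mapsto\tilde J$, $\alpha\mapsto\tilde\alpha=(\det[D\phi])^{-1}$, $\xi\equiv 1$, $K\mapsto\tilde K$, and so on, where crucially $\tilde\alpha(q_0)\xi(q_0)=\tilde\alpha(q_0)\neq 0$.

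Next I would invoke Proposition~\ref{lemat2}(ii) in this substituted form: since $\tilde\alpha(q_0)\cdot 1\neq 0$, the point $q_0$ is a simple zero of $\tilde H$ if and only if it is a simple zero of $\tilde H_1$, and the two signed products agree. Combined with Corollary~\ref{star}, which translates "simple zero of $\tilde H$ together with $\rank[D\tilde f(q_0)]=2$" into "$\tilde f$ is a simple swallowtail at $q_0$," this gives the equivalence claim: $f$ is a simple swallowtail at $p_0$ if and only if $f\circ\phi$ is one at $q_0$. Here I would also note $\rank[D\tilde f(q_0)]=\rank[Df(\phi)(q_0)]\cdot[D\phi(q_0)]=\rank[Df(p_0)]=2$, since $[D\phi(q_0)]$ is invertible, so the corank hypothesis transfers automatically.

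Finally, for the sign formula I would simply read off the last display: using $\det[D\tilde H_1(q_0)]=\det[DH(p_0)]\cdot\det[D\phi(q_0)]$ and $\tilde Z_1(q_0)=Z(p_0)$, the equality $I(\tilde f,q_0)=\sgn(\tilde Z_1(q_0)\cdot\det[D\tilde H_1(q_0)])$ becomes $\sgn(Z(p_0)\cdot\det[DH(p_0)])\cdot\sgn\det[D\phi(q_0)]=I(f,p_0)\cdot\sgn\det[D\phi(q_0)]$, which is exactly the asserted transformation law.

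Since the computation is already laid out in full before the statement, there is no genuine obstacle; the only point requiring care is verifying that the hypotheses of Proposition~\ref{lemat2}(ii) are legitimately met under the substitution, in particular that $\tilde\alpha(q_0)\neq 0$ (guaranteed by invertibility of $D\phi$) and that the matrix factorization relating $(\tilde J,\tilde X,\tilde Y)$ to $(\tilde J,\tilde X_1,\tilde Y_1)$ is of the lower-triangular type required there. As a remark, the orientation-reversing case $\sgn\det[D\phi(q_0)]=-1$ then flips the sign, consistent with Example~\ref{przyklad1} where $f_-=f_+\circ\phi_-$ and $\phi_-$ reverses orientation.
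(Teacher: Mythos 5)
Your proposal is correct and follows essentially the same route as the paper: Section 5's computation identifies $\tilde X_1=X(\phi)$, $\tilde Y_1=Y(\phi)$, $\tilde Z_1=Z(\phi)$, then invokes Proposition \ref{lemat2}(ii) with $\xi\equiv 1$, $\alpha=\tilde\alpha=(\det[D\phi])^{-1}$ together with Corollary \ref{star} to transfer simplicity and read off the sign formula, exactly as you do. Your explicit remarks that $\rank[D\tilde f(q_0)]=2$ transfers via invertibility of $D\phi(q_0)$ and that $\tilde\alpha(q_0)\neq 0$ are precisely the hypotheses the paper uses implicitly.
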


\section{Coordinates in the target}

In this section we shall show that a change of coordinates in the target
does not change the sign of a swallowtail.

Let $\psi:(\R^3,f(p_0))\rightarrow(\R^3,s_0)$ be a germ of a diffeomorphism.
Put $\bar f=\psi\circ f$.
Then the derivative matrix $[D\bar f]=[D\psi(f)]\cdot[Df]$, and then
$\bar J=\det[D\bar f]=\det[D\psi(f)]\cdot J=\bar\alpha\cdot J$, where
$\bar\alpha=\det[D\psi(f)]$, $\bar\alpha(p_0)\neq 0$.
Since $J(p_0)=0$, we have $\bar J(p_0)=0$ and $\nabla\bar J(p_0)=\bar\alpha(p_0)\nabla J(p_0)\neq\bo$.

Denote $S_1(\bar f)=\{p\in\R^n\ |\ \rank D\bar f(p)=2\}$.
Since $\rank D\bar f=\rank Df$,
the set $\bar J^{-1}(0)$   of critical points of $\bar f$ equals $S_1(f)$, and so is a smooth surface.
Of course $D\bar f\cdot K=D\psi(f)\cdot (Df\cdot K)\equiv\bo$ along $S_1(\bar f)=S_1(f)$.

Put $\bar X=\langle\nabla\bar J,K\rangle$. Hence
$ \bar X=\langle\bar\alpha\cdot\nabla J+J\cdot\nabla\bar\alpha,K\rangle=
\bar\alpha\cdot X+\bar R_1\cdot J$, where $\bar R_1=\langle\nabla\bar\alpha,K\rangle$.
In particular, $\bar X(p_0)=0$.
Moreover, $\nabla\bar X=\bar\alpha\cdot\nabla X+X\cdot\nabla\bar\alpha+\bar R_1\cdot \nabla J+J\cdot\nabla\bar R_1$,
so that $\nabla \bar X(p_0)=\bar\alpha(p_0)\nabla X(p_0)+\bar R_1(p_0)\nabla J(p_0)$.

Put 
$\bar Y=\langle\nabla\bar X,K\rangle
=\langle\bar\alpha\cdot\nabla X+X\cdot\nabla\bar\alpha+\bar R_1\cdot\nabla J+J\cdot\nabla \bar R_1,K\rangle$
$$=\langle\nabla\bar R_1,K\rangle J+(\langle\nabla\bar\alpha,K\rangle+\bar R_1)\cdot X+\bar\alpha\cdot Y
=\bar R_2\cdot J+\bar R_3\cdot X+\bar\alpha\cdot Y\ .$$
In particular, $\bar Y(p_0)=0$. Moreover
$$\nabla\bar Y(p_0)=\bar R_2(p_0)\nabla J(p_0)+\bar R_3(p_0)\cdot \nabla X(p_0)+\bar\alpha(p_0)\nabla Y(p_0)\ .$$
Since $\bar\alpha(p_0)\neq 0$, vectors $\nabla\bar J(p_0)$, $\nabla\bar X(p_0)$, $\nabla \bar Y(p_0)$
are linearly independent.

Put  $\bar H=(\bar J,\bar X,\bar Y)$ and  $\bar Z=\langle\nabla\bar Y,K\rangle$. 
The mapping $\bar H$ has a simple zero at $p_0$ if and only if $H$ does. 
\begin{prop}\label{fakt61}
Let $\psi:(\R^3,f(p_0))\rightarrow(\R^3,s_0)$ be a germ of a diffeomorphism.
Then $f$ is a simple swallowtail if and only if $\psi\circ f$ does. If that is the case then
$$I(\psi\circ f,p_0)=I(f,p_0).$$
\end{prop}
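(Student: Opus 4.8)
The plan is to assemble both assertions from the gradient identities already established in the preamble, using Corollary~\ref{star} to phrase the swallowtail condition as ``$\rank[Df(p_0)]=2$ together with a simple zero of $H$''.

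For the equivalence, I would note first that because $[D\bar f]=[D\psi(f)]\cdot[Df]$ and $[D\psi(f(p_0))]$ is invertible, one has $\rank[D\bar f(p_0)]=\rank[Df(p_0)]$, so the rank-two condition transfers. The preamble already records that $\bar H$ has a simple zero at $p_0$ if and only if $H$ does; combined with the rank statement and Corollary~\ref{star}, this shows that $\psi\circ f$ is a simple swallowtail exactly when $f$ is.

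For the sign, I would compute $\det[D\bar H(p_0)]$ and $\bar Z(p_0)$ separately. The three gradient relations at $p_0$,
\[
\nabla\bar J=\bar\alpha\,\nabla J,\quad
\nabla\bar X=\bar R_1\,\nabla J+\bar\alpha\,\nabla X,\quad
\nabla\bar Y=\bar R_2\,\nabla J+\bar R_3\,\nabla X+\bar\alpha\,\nabla Y,
\]
express the columns of $D\bar H(p_0)$ through those of $DH(p_0)$ by an upper-triangular matrix with every diagonal entry equal to $\bar\alpha(p_0)$, so taking determinants gives $\det[D\bar H(p_0)]=\bar\alpha^3(p_0)\det[DH(p_0)]$. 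For $\bar Z(p_0)=\langle\nabla\bar Y(p_0),K(p_0)\rangle$ I would use the same expression for $\nabla\bar Y(p_0)$ and observe that the two error terms vanish: since $X=\langle\nabla J,K\rangle$ and $Y=\langle\nabla X,K\rangle$ with $X(p_0)=Y(p_0)=0$, the inner products $\langle\nabla J(p_0),K(p_0)\rangle$ and $\langle\nabla X(p_0),K(p_0)\rangle$ are both zero, leaving $\bar Z(p_0)=\bar\alpha(p_0)\,Z(p_0)$.

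Putting the two computations together yields $\bar Z(p_0)\cdot\det[D\bar H(p_0)]=\bar\alpha^4(p_0)\,Z(p_0)\det[DH(p_0)]$, whence $I(\psi\circ f,p_0)=\sgn(\bar\alpha^4(p_0))\cdot I(f,p_0)=I(f,p_0)$. The only point requiring real thought --- and the feature distinguishing this case from the domain change of Proposition~\ref{fakt51} --- is that a target diffeomorphism contributes one power of $\bar\alpha$ to $\bar Z$ and three to $\det[D\bar H]$, an \emph{even} total power; thus the sign of $\bar\alpha$ is irrelevant and no factor $\sgn\det[D\psi]$ survives, in contrast with the orientation-sensitive factor $\sgn\det[D\phi]$ that appears for a domain diffeomorphism.
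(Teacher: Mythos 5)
Your proposal is correct and follows essentially the same route as the paper: both use the triangular gradient relations to get $\det[D\bar H(p_0)]=\bar\alpha^3(p_0)\det[DH(p_0)]$, and both obtain $\bar Z(p_0)=\bar\alpha(p_0)Z(p_0)$ by observing that the terms $\bar R_2(p_0)\langle\nabla J(p_0),K(p_0)\rangle=\bar R_2(p_0)X(p_0)$ and $\bar R_3(p_0)\langle\nabla X(p_0),K(p_0)\rangle=\bar R_3(p_0)Y(p_0)$ vanish, so the sign is preserved by the even power $\bar\alpha^4(p_0)$. Your closing remark about the even total power of $\bar\alpha$, contrasted with the orientation-sensitive factor in Proposition~\ref{fakt51}, is exactly the point the paper's computation makes implicitly.
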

\noindent{\em Proof.} We have
$\bar Z(p_0)=\langle \nabla\bar Y(p_0),K(p_0)\rangle=\bar R_2(p_0)\cdot X(p_0)+\bar R_3(p_0)\cdot Y(p_0)+\bar\alpha(p_0)\cdot Z(p_0)=\bar\alpha(p_0)\cdot Z(p_0) .$
Using similar arguments as before one may observe that
$$I(\psi\circ f,p_0)=\sgn(\bar Z(p_0)\cdot\det[D\bar H(p_0)])$$
$$=\sgn(\bar\alpha(p_0)\cdot Z(p_0)\cdot(\bar\alpha(p_0))^3\cdot\det[DH(p_0)])=I(f,p_0).\ \Box$$

By Propositions \ref{fakt51} and \ref{fakt61} we get

\begin{theorem}\label{ttt1}
Let $\phi:(\R^3,q_0)\rightarrow (\R^3,p_0)$ and $\psi:(\R^3,f(p_0))\rightarrow (\R^3,s_0)$ be germs
of diffeomorphisms. Then $f$ is a simple swallowtail if and only if $\psi\circ f\circ \phi$ does. If that is the case
then
$$I(\psi\circ f\circ \phi,q_0)=I(f,p_0)\cdot \sgn\det[D\phi(q_0)].\ \Box$$
\end{theorem}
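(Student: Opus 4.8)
The plan is to obtain Theorem \ref{ttt1} by composing the two results just established, since $\psi\circ f\circ\phi$ decomposes into a change of coordinates in the target followed by a change of coordinates in the domain. First I would introduce the intermediate germ $g=\psi\circ f$ and observe that $g(p_0)=\psi(f(p_0))=s_0$, so Proposition \ref{fakt61} applies verbatim to the pair $(\psi,f)$: the germ $g=\psi\circ f$ is a simple swallowtail at $p_0$ if and only if $f$ is, and in that case $I(\psi\circ f,p_0)=I(f,p_0)$.

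Next I would write $\psi\circ f\circ\phi=g\circ\phi$, where $\phi:(\R^3,q_0)\rightarrow(\R^3,p_0)$ is the given diffeomorphism and $g$ now plays the role of $f$ in Proposition \ref{fakt51}. Applying that proposition gives that $g\circ\phi$ is a simple swallowtail at $q_0$ if and only if $g$ is a simple swallowtail at $p_0$, together with the sign relation $I(g\circ\phi,q_0)=I(g,p_0)\cdot\sgn\det[D\phi(q_0)]$. The only verification required here is that the source base point of $g=\psi\circ f$, namely $p_0$, coincides with the target base point of $\phi$, which holds by hypothesis.

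Chaining the two equivalences yields that $\psi\circ f\circ\phi$ is a simple swallowtail at $q_0$ if and only if $f$ is a simple swallowtail at $p_0$. Substituting $I(g,p_0)=I(\psi\circ f,p_0)=I(f,p_0)$ into the sign relation then gives
$$I(\psi\circ f\circ\phi,q_0)=I(f,p_0)\cdot\sgn\det[D\phi(q_0)],$$
which is the assertion. As a consistency check one could equally first set $h=f\circ\phi$, apply Proposition \ref{fakt51} to obtain $I(h,q_0)=I(f,p_0)\cdot\sgn\det[D\phi(q_0)]$, and then apply Proposition \ref{fakt61} to the pair $(\psi,h)$ (note $h(q_0)=f(p_0)$) to conclude $I(\psi\circ h,q_0)=I(h,q_0)$; both orders deliver the same formula.

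The argument presents no genuine obstacle, since both constituent propositions are already proven and the task reduces to applying them in sequence. The only point deserving minor care is the bookkeeping of the base points: one must confirm that $g=\psi\circ f$ is centered at $p_0$ with value $s_0$ before composing with $\phi$ on the right, so that the hypotheses of each proposition are literally met.
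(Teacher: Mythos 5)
Your proposal is correct and is exactly the paper's argument: the paper derives Theorem \ref{ttt1} directly by combining Propositions \ref{fakt51} and \ref{fakt61}, just as you do by writing $\psi\circ f\circ\phi=(\psi\circ f)\circ\phi$ and chaining the two equivalences and sign relations. Your attention to the base-point bookkeeping is a reasonable elaboration of what the paper leaves implicit.
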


\section{Normal form of a swallowtail}
In this section we show that there is a one-to-one correspondence between the sign of a swallowtail
and its normal form.
\begin{theorem}\label{klasyfikacja}
Assume that $f:(\R^3,p_0)\rightarrow(\R^3,f(p_0))$ is a simple swallowtail.
Then  $f$ takes on an $S_{1_3}$ singularity transversely at $p_0$.
Moreover there exists an orientation preserving diffeomorphism $\phi:(\R^3,\bo)\rightarrow(\R^3,p_0)$
and a diffeomorphism $\psi:(\R^3,f(p_0))\rightarrow (\R^3,\bo)$ such that
$\psi\circ f\circ \phi=f_+$ (resp. $f_-$) if and only if $I(f,p_0)=+1$ (resp. $-1$).
\end{theorem}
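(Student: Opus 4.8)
The plan is to prove the transversality statement and the normal-form/sign correspondence together: I would obtain the normal form from Morin's classification, then read off the sign from the orientation of the source diffeomorphism using the invariance results already established.

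First I would make the link between the simple-swallowtail conditions and Morin's theory explicit. By Remarks \ref{remark1}, \ref{remark2}, \ref{remark3} the functions $J,X,Y$ cut out the successive singular sets,
$$S_1(f)=J^{-1}(0),\quad S_{1,1}(f)=J^{-1}(0)\cap X^{-1}(0),\quad S_{1,1,1}(f)=H^{-1}(\bo),$$
and they are precisely the functions produced by Morin's iterated kernel-field construction, since $X=\langle\nabla J,K\rangle$, $Y=\langle\nabla X,K\rangle$, $Z=\langle\nabla Y,K\rangle$ are the successive Lie derivatives of $\det Df$ along the kernel field $K$. In this language, $H=(J,X,Y)$ having a simple zero at $p_0$ (Corollary \ref{star}) is exactly the statement that $0$ is a regular value of the defining map of $S_{1_3}$ pulled back by $j^3f$, i.e.\ that $j^3f$ is transverse to $S_{1_3}$ at $p_0$; and $Z(p_0)\neq 0$ says that the Morin type is exactly $3$ (a swallowtail) and not higher. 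This proves the first assertion and lets me invoke Morin's theorem.

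Next, by Morin's theorem \cite{morin} there are germs of diffeomorphisms $\phi_0,\psi_0$, whose orientations are not yet under control, with $\psi_0\circ f\circ\phi_0=f_+$; one may always reach $f_+$ rather than $f_-$ because $f_-=f_+\circ\phi_-$ by Example \ref{przyklad1}, so the two normal forms are $\Aa$-equivalent. Applying Theorem \ref{ttt1} at $q_0=\bo$ gives $I(f_+,\bo)=I(f,p_0)\cdot\sgn\det[D\phi_0(\bo)]$, and since $I(f_+,\bo)=+1$ by Example \ref{przyklad3} I obtain $I(f,p_0)=\sgn\det[D\phi_0(\bo)]$. Thus $I(f,p_0)=+1$ precisely when $\phi_0$ preserves orientation, in which case I take $\phi=\phi_0$, $\psi=\psi_0$. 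If instead $I(f,p_0)=-1$ then $\phi_0$ reverses orientation, so $\phi=\phi_0\circ\phi_-$ preserves orientation while $\psi_0\circ f\circ\phi=f_+\circ\phi_-=f_-$ by Example \ref{przyklad1}. Either way $f$ is carried to $f_\pm$ by an orientation-preserving source diffeomorphism, with the sign matching. The converse is the same computation read backwards: if $\psi\circ f\circ\phi=f_\pm$ with $\phi$ orientation preserving, then Theorem \ref{ttt1} together with Example \ref{przyklad3} gives $I(f,p_0)=I(f_\pm,\bo)=\pm1$.

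The step I expect to be the main obstacle is the identification in the second paragraph, namely verifying carefully that the intrinsic conditions on $J,X,Y,Z$ together with the independence of $\nabla J(p_0),\nabla X(p_0),\nabla Y(p_0)$ correspond exactly to transversality of $j^3f$ to the Morin stratum $S_{1_3}$. This amounts to matching the kernel-field functions with the standard local equations of $S_{1_3}$ and checking that $Z(p_0)\neq 0$ rules out $S_{1_4}$; once this dictionary is in place, Morin's normal-form theorem applies and the remaining sign bookkeeping is routine via the invariance results of the preceding sections.
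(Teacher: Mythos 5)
Your second and third paragraphs are correct and are essentially the paper's own argument: invoke Morin to get $\psi_0\circ f\circ\phi_0=f_+$, use $f_-=f_+\circ\phi_-$ (Example \ref{przyklad1}) to repair the orientation of the source diffeomorphism, and then read off the sign from Theorem \ref{ttt1} together with $I(f_\pm,\bo)=\pm1$ (Example \ref{przyklad3}). That bookkeeping, including the converse direction, is exactly what the paper does and needs no changes.

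The genuine gap is the one you flag yourself: the first assertion of the theorem (transversality to $S_{1_3}$, which is also what licenses the appeal to Morin's normal-form theorem) is asserted, not proved. Saying that a simple zero of $H=(J,X,Y)$ ``is exactly'' transversality of $j^3f$ to $S_{1_3}$ presupposes that $J,X,Y$ are pullbacks under $j^3f$ of regular local defining equations of $S_{1_3}$ in jet space; nothing in the earlier sections provides this, since there $J,X,Y$ are defined by the kernel-field construction with no link to jet-space equations, so your ``dictionary'' is precisely the statement to be proved. The paper closes this with a short concrete device you are missing: since $\rank[Df(p_0)]=2$, choose coordinates in source and target so that $f=(h(x,y,z),y,z)$; then one may take $K=(1,0,0)$, whence $J=\partial h/\partial x$, $X=\partial^2h/\partial x^2$, $Y=\partial^3h/\partial x^3$, and — using the invariance results (Proposition \ref{lemat2}(ii) and Theorem \ref{ttt1}) to know that the simple-swallowtail hypothesis survives the change of coordinates and of $K$ — the hypothesis becomes: these three functions vanish at $\bo$ and their gradients are linearly independent. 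That is verbatim the criterion of \cite[page 176]{golub} for taking on an $S_{1_3}$ singularity transversely (and, via Corollary \ref{star}, $Z(\bo)\neq 0$ comes for free, excluding higher Morin type); since transversality is coordinate-invariant, verifying it in these adapted coordinates suffices. With that reduction inserted in place of your second paragraph, your proof is complete and coincides with the paper's.
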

{\em Proof.} One may assume that $p_0=f(p_0)=\bo$. Since $\rank [Df(\bo)]=2$,
there exist coordinate systems centered at $\bo$ such that $f$ has the form
$f=(h(x,y,z),y,z)$, and $\frac{\partial h}{\partial x}(\bo)=0$.

Then $J=\frac{\partial h}{\partial x}$, and one may take $K=(1,0,0)$. Therefore
$$X=\frac{\partial^2h}{\partial x^2}\ \ ,\ \ Y=\frac{\partial^3h}{\partial x^3}\ .$$
As the mapping $f$ is a simple swallowtail at the origin, then
$$\frac{\partial h}{\partial x}(\bo)=\frac{\partial^2 h}{\partial x^2}(\bo)=\frac{\partial^3h}{\partial x^3}(\bo)=0\ ,$$
and the vectors $\nabla\left(\frac{\partial h}{\partial x}\right)(\bo)$, $\nabla\left(\frac{\partial^2 h}{\partial x^2}\right)(\bo)$,
$\nabla\left(\frac{\partial^3 h}{\partial x^3}\right)(\bo)$ are linearly independent.

By \cite[page 176]{golub}, the mapping $f$ takes on an $S_{1_3}$ singularity transversely at $\bo$.
By \cite{morin} or \cite[Theorem 4.1]{golub}, there are diffeomorphisms
$\phi,\psi:(\R^3,\bo)\rightarrow(\R^3,\bo)$ such that $\psi\circ f\circ\phi=f_+$.

By Example \ref{przyklad1}, there  is an orientation reversing diffeomorphism $\phi_-$ such that $f_+\circ\phi_-=f_-$.
Hence, if $\phi$ does reverse the orientation then $\phi\circ\phi_-$ does preserve it, and $\psi\circ f\circ(\phi\circ\phi_-)=f_-$.

Then we may assume that $\psi\circ f\circ\phi=f_{\pm}$ and $\phi$ preserves the orientation.
By Example \ref{przyklad3} and Theorem \ref{ttt1}
$$\pm 1=I(f_{\pm},\bo)=I(\psi\circ f\circ\phi,\bo)=I(f,\bo).$$
Hence $I(f,\bo)=+1$ if and only if $\psi\circ f\circ\phi=f_+$, and $I(f,\bo)=-1$ if and only if $\psi\circ f\circ\phi=f_-.\  \Box$

As an immediate consequence of Theorem \ref{ttt1} we get

\begin{theorem}\label{ttt2}
Assume  that $f:(\R^3,p_0)\rightarrow (\R^3,f(p_0))$, $g:(\R^3,p_1)\rightarrow (\R^3,g(p_1))$ are simple swallowtails.

Then $I(f,p_0)=I(g,p_1)$ (resp. $I(f,p_0)=-I(g,p_1)$) if and only if there exists an orientaion preserving (resp. reversing)
diffeomorphism $\phi:(\R^3,p_1)\rightarrow (\R^3,p_0)$ and a diffeomorphism $\psi:(\R^3,f(p_0))\rightarrow (\R^3, g(p_1))$
such that $\psi\circ f\circ \phi=g$.\ $\Box$
\end{theorem}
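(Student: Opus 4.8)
The plan is to derive Theorem \ref{ttt2} purely as a bookkeeping consequence of the classification already established in Theorem \ref{klasyfikacja}, together with the transformation law from Theorem \ref{ttt1}. The key observation is that Theorem \ref{klasyfikacja} gives each simple swallowtail a \emph{normal form}: $f$ is equivalent to $f_+$ or $f_-$ according as $I(f,p_0)=+1$ or $-1$, through an orientation-preserving change in the domain and an arbitrary change in the target. So the strategy is to reduce both $f$ and $g$ to their normal forms, compare the two normal forms, and then invoke Example \ref{przyklad1} to pass between $f_+$ and $f_-$ when the signs disagree.

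First I would prove the ``only if'' direction by direct construction. Suppose $I(f,p_0)=I(g,p_1)=\varepsilon\in\{+1,-1\}$. By Theorem \ref{klasyfikacja} applied to $f$, there are an orientation-preserving $\phi_f:(\R^3,\bo)\rightarrow(\R^3,p_0)$ and a diffeomorphism $\psi_f:(\R^3,f(p_0))\rightarrow(\R^3,\bo)$ with $\psi_f\circ f\circ\phi_f=f_\varepsilon$; likewise there are an orientation-preserving $\phi_g$ and a $\psi_g$ with $\psi_g\circ g\circ\phi_g=f_\varepsilon$. Eliminating the common normal form $f_\varepsilon$ gives
\[
\psi_f\circ f\circ\phi_f=\psi_g\circ g\circ\phi_g,
\]
so that $g=(\psi_g^{-1}\circ\psi_f)\circ f\circ(\phi_f\circ\phi_g^{-1})$. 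Setting $\phi=\phi_f\circ\phi_g^{-1}$ and $\psi=\psi_g^{-1}\circ\psi_f$ exhibits the required factorisation $g=\psi\circ f\circ\phi$, and $\phi$ is orientation-preserving because it is a composite of two orientation-preserving germs (and $\phi_g^{-1}$ inherits the orientation of $\phi_g$). For the case $I(f,p_0)=-I(g,p_1)$, I would first compose with the orientation-reversing $\phi_-$ of Example \ref{przyklad1}, which swaps $f_+$ and $f_-$; inserting this between the two normal forms produces an orientation-reversing $\phi$ with $g=\psi\circ f\circ\phi$.

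For the ``if'' direction, suppose such $\phi,\psi$ exist with $\psi\circ f\circ\phi=g$. Here I would apply Theorem \ref{ttt1} verbatim: it states that $\psi\circ f\circ\phi$ is a simple swallowtail with
\[
I(\psi\circ f\circ\phi,q_0)=I(f,p_0)\cdot\sgn\det[D\phi(q_0)].
\]
Since $g=\psi\circ f\circ\phi$, this reads $I(g,p_1)=I(f,p_0)\cdot\sgn\det[D\phi(p_1)]$. If $\phi$ preserves orientation the sign factor is $+1$ and $I(f,p_0)=I(g,p_1)$; if $\phi$ reverses it the factor is $-1$ and $I(f,p_0)=-I(g,p_1)$, exactly as claimed.

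I do not expect a genuine obstacle, since every analytic input is already packaged in Theorems \ref{klasyfikacja} and \ref{ttt1}; the proof is a matter of composing germs correctly. The one point demanding care is orientation bookkeeping: I must track $\sgn\det[D\phi]$ through the inverses and composites $\phi_f\circ\phi_g^{-1}$ and, in the sign-reversing case, verify that inserting $\phi_-$ flips the orientation parity exactly once. Keeping $\phi_g$ orientation-preserving ensures $\phi_g^{-1}$ is too, so the parity of the assembled $\phi$ is determined solely by whether $\phi_-$ was inserted, which is in turn governed by whether the signs of $f$ and $g$ agree.
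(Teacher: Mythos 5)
Your proof is correct and is essentially the argument the paper intends: the paper states Theorem \ref{ttt2} as an immediate consequence of the preceding results, and your write-up just makes the two ingredients explicit — Theorem \ref{klasyfikacja} (plus Example \ref{przyklad1}) to bring both germs to a common normal form for the existence direction, and Theorem \ref{ttt1} for the converse sign computation. The orientation bookkeeping through the composites $\phi_f\circ\phi_g^{-1}$ and the insertion of $\phi_-$ is handled correctly.
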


\section{Polynomial mappings $\R^3\rightarrow\R^3$}
First we recall a method for counting the number of real zeros of an ideal in the multivariate case.

Let $I\subset\R[x_1,\ldots,x_n]$ be an ideal such that $\Aa=\R[x_1,\ldots,x_n]/I$
is an algebra of finite dimension, so that the set $V(I)$ of real zeros of $I$ is finite.
For $u\in\R[x_1,\ldots,x_n]$ denote by $t(u)$ the trace of the linear endomorphism
$\Aa\ni a\mapsto u\cdot a\in\Aa$. Then $t:\Aa\rightarrow\R$ is a linear functional.

\begin{theorem}[\cite{becker, pedersenetal}]\label{traceformula}
Take $g\in\R[x_1,\ldots,x_n]$. Let $\Theta$ (resp. $\Psi$) be the quadratic form
on $\Aa$ given by $\Theta(h)=t(h^2)$ (resp. $\Psi(h)=t(g\cdot h^2)$), where $h\in\Aa$. Then
$$ \sigma(\Theta)=\# V(I),$$
$$\sigma(\Psi)=\sum\sgn(g(p)),\ \mbox{where}\ p\in V(I),$$
and $\sigma(\cdot)$ denotes the signature of a quadratic form.

Moreover, if $\Psi$ is non-degenerate then $g(p)\neq 0$ at each $p\in V(I)$.
If that is the case then
$$(\sigma(\Theta)+\sigma(\Psi))/2=\sum\sgn(g(p)),\ \mbox{where}\ p\in V(I)\cap\{g>0\},$$
$$(\sigma(\Theta)-\sigma(\Psi))/2=\sum\sgn(g(p)),\ \mbox{where}\ p\in V(I)\cap\{g<0\}.$$
\end{theorem}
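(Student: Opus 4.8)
The plan is to exploit the structure of the finite-dimensional commutative algebra $\Aa=\R[x_1\kropek x_n]/I$. Since $\Aa$ is Artinian, it splits as a finite product of local Artinian $\R$-algebras, one for each maximal ideal,
$$\Aa\cong\prod_i\Aa_i,$$
where each $(\Aa_i,\mathfrak{m}_i)$ has residue field $\Aa_i/\mathfrak{m}_i$ equal to $\R$ or to $\C$. The factors with residue field $\R$ correspond bijectively to the real zeros $p\in V(I)$, while those with residue field $\C$ correspond to conjugate pairs of non-real complex zeros. The trace of multiplication by $u=(u_i)_i$ is the sum over $i$ of the traces on the factors, and $h^2=(h_i^2)_i$; hence both $\Theta$ and $\Psi$ are block-diagonal with respect to this decomposition, so $\sigma(\Theta)=\sum_i\sigma(\Theta_i)$ and $\sigma(\Psi)=\sum_i\sigma(\Psi_i)$. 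It therefore suffices to compute each local signature.

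The key computation is the local trace. On a local factor $\Aa_i$ the maximal ideal $\mathfrak{m}_i$ is nilpotent, so every $u\in\Aa_i$ writes as $u(p)+n$ with $n\in\mathfrak{m}_i$ nilpotent; multiplication by $n$ is then a nilpotent operator and contributes nothing to the trace. First I would record that $t_i(u)=(\dim_\R\Aa_i)\,u(p)$ when the residue field is $\R$ (here $u(p)\in\R$ is the residue), and $t_i(u)=2m\,\mathrm{Re}\,u(q)$ when the residue field is $\C$ (here $m=\dim_\C\Aa_i$ and $u(q)\in\C$ is the residue), using that the real trace of a $\C$-linear map is twice the real part of its complex trace.

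Feeding $h^2$ and $g\cdot h^2$ into these formulas gives the local forms explicitly. For a real factor attached to $p$, writing $c=h(p)\in\R$, I get $\Theta_i(h)=(\dim_\R\Aa_i)\,c^2$ and $\Psi_i(h)=(\dim_\R\Aa_i)\,g(p)\,c^2$; these are forms of rank at most one whose radical is $\mathfrak{m}_i$, so $\sigma(\Theta_i)=1$ and $\sigma(\Psi_i)=\sgn(g(p))$. For a complex factor attached to $\{q,\bar q\}$, writing $w=h(q)\in\C$ and $\gamma=g(q)$, I get $\Theta_i(h)=2m\,\mathrm{Re}(w^2)$ and $\Psi_i(h)=2m\,\mathrm{Re}(\gamma w^2)$; in the real coordinates $w=a+bi$ the first equals $2m(a^2-b^2)$ and the second has matrix of determinant $-4m^2|\gamma|^2\le 0$, so each is indefinite or zero and $\sigma(\Theta_i)=\sigma(\Psi_i)=0$. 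Summing over the factors yields $\sigma(\Theta)=\#V(I)$ and $\sigma(\Psi)=\sum_{p\in V(I)}\sgn(g(p))$.

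For the last assertions I would read off degeneracy from the radicals just computed. The radical of $\Psi_i$ contains $\mathfrak{m}_i$ in every case, and on a real factor $\Psi_i$ vanishes identically when $g(p)=0$; hence non-degeneracy of $\Psi$ forces $\dim_\R\Aa_i=1$ and $g(p)\ne 0$ at each real factor. With $g(p)\ne 0$ at every $p\in V(I)$, each real point contributes $(1,\sgn g(p))$ to the pair $(\sigma(\Theta),\sigma(\Psi))$, so $(\sigma(\Theta)+\sigma(\Psi))/2$ counts the points with $g(p)>0$ and $(\sigma(\Theta)-\sigma(\Psi))/2$ those with $g(p)<0$, which is exactly the claim. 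The main obstacle is getting the real-versus-complex bookkeeping right in the trace computation, in particular the factor $2\,\mathrm{Re}$ for complex residue fields and the fact that the signature of a degenerate form must still be read as the number of positive minus the number of negative eigenvalues; once these are pinned down the rest is bookkeeping over the product decomposition.
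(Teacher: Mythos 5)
This theorem is not proved in the paper at all: it is quoted as a known result from the cited references of Becker--W\"ormann and Pedersen--Roy--Szpirglas, so there is no internal proof to compare yours against. What you have written is, in essence, a correct reconstruction of the standard proof from those references: split the Artinian algebra $\Aa$ into local factors, observe that the trace forms are block-diagonal, and compute the local signatures ($1$, resp.\ $\sgn(g(p))$, at a factor corresponding to a real zero; $0$ at a factor corresponding to a conjugate pair of complex zeros), then read off the non-degeneracy consequences from the radicals. Two points deserve attention in a careful write-up. First, your trace computation on a factor $\Aa_i$ with residue field $\C$ presupposes that multiplication operators on $\Aa_i$ are $\C$-linear, i.e.\ that $\Aa_i$ carries a $\C$-algebra structure compatible with the residue map; this is true but is not automatic --- it requires lifting a root of $T^2+1$ from the residue field to $\Aa_i$ via Hensel's lemma (the root is simple, so the lift exists), and this step should be stated explicitly. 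Second, the last displayed formula of the statement, read literally, contains a sign slip: over $V(I)\cap\{g<0\}$ one has $\sum\sgn(g(p))=-\#\bigl(V(I)\cap\{g<0\}\bigr)$, whereas $(\sigma(\Theta)-\sigma(\Psi))/2$ equals $+\#\bigl(V(I)\cap\{g<0\}\bigr)$; what you proved is the cardinality version, which is the statement actually used later in Theorem \ref{liczenie}, so your proof establishes the intended claim rather than the literal one.
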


In the remainder of this section we shall show how to apply the above result so as to
compute the number of positive
and negative swallowtails of an polynomial mapping $f=(f_1,f_2,f_3):\R^3\rightarrow\R^3$.

Let $J=\det[Df]$, and let $I_1$ be the ideal in $\R[x,y,z]$ generated by $J$, $\partial J/\partial x$, $\partial J/\partial y$, $\partial J/\partial z$.
Then $J^{-1}(0)$ is the set of critical points of $f$.
\begin{prop}\label{kryt}
Assume that $I_1=\R[x,y,z]$. Then $\nabla J(p)\neq 0$ at each $p\in J^{-1}(0)$, so that $J^{-1}(0)$ is either empty or
a smooth surface.
\end{prop}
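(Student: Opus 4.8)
The plan is to exploit the algebraic hypothesis $I_1=\R[x,y,z]$ directly. This condition says precisely that the constant $1$ belongs to the ideal $I_1$, so there exist polynomials $a_0,a_1,a_2,a_3\in\R[x,y,z]$ with
$$1=a_0\cdot J+a_1\cdot\frac{\partial J}{\partial x}+a_2\cdot\frac{\partial J}{\partial y}+a_3\cdot\frac{\partial J}{\partial z}\ .$$
This B\'ezout-type identity is the whole engine of the argument, and it is equivalent to saying that $J,\partial J/\partial x,\partial J/\partial y,\partial J/\partial z$ have no common zero.

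Next I would argue by contradiction. Suppose some $p\in J^{-1}(0)$ had $\nabla J(p)=\bo$, that is $(\partial J/\partial x)(p)=(\partial J/\partial y)(p)=(\partial J/\partial z)(p)=0$. Together with $J(p)=0$ this would make $p$ a common zero of all four generators of $I_1$. Evaluating the displayed identity at $p$ then yields $1=0$, which is absurd. Hence no such $p$ exists, and $\nabla J(p)\neq\bo$ at every $p\in J^{-1}(0)$.

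Finally, the geometric conclusion is a standard application of the regular value theorem. If $J^{-1}(0)$ is nonempty, then at each of its points the gradient $\nabla J$ is nonzero, so $0$ is a regular value of $J$ in a neighbourhood of $J^{-1}(0)$; thus $J^{-1}(0)$ is a smooth hypersurface in $\R^3$, i.e.\ a smooth surface. I do not expect any genuine obstacle here: the only point requiring care is the correct reading of the hypothesis $I_1=\R[x,y,z]$ as the membership $1\in I_1$, after which the contradiction is immediate and the rest is the implicit function theorem.
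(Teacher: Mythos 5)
Your proof is correct and follows essentially the same approach as the paper: the paper's one-line proof (``Because $1\in I_1$, then $J$ and $\nabla J$ do not vanish simultaneously at any point'') is exactly your B\'ezout-identity argument, stated tersely. You have merely spelled out the evaluation-at-$p$ contradiction and the implicit function theorem step that the paper leaves implicit.
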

\begin{proof}
Because $1\in I_1$, then $J$ and $\nabla J$ do not vanish simultaneously at any point.
\end{proof}

Let $K_{ij}=\nabla f_i\times \nabla f_j$, where $1\leq i<j\leq 3$, and let
$X_{ij}=\langle\nabla J,K_{ij}\rangle$. Let $I_2\subset\R[x,y,z]$ denote the ideal generated by $J$
and all $2\times 2$--minors of the derivative matrix $Df$. 
\begin{prop}\label{krytt}
Assume that $I_1=I_2=\R[x,y,z]$. Then
\begin{itemize}
\item[(i)] $\rank [Df(p)]=2$ at any $p\in J^{-1}(0)$, and $J^{-1}(0)=S_1(f)$ is either empty or a smooth surface,
\item[(ii)] at each $p\in J^{-1}(0)$, at least one $K_{ij}(p)\neq\bo$,

\end{itemize}
\end{prop}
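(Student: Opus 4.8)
The plan is to treat the two ideal hypotheses separately and to deduce (ii) from (i).

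First I would dispose of (i). The hypothesis $I_1=\R[x,y,z]$ is exactly the input of Proposition \ref{kryt}, which already guarantees that $\nabla J(p)\neq\bo$ at every $p\in J^{-1}(0)$, so that $J^{-1}(0)$ is either empty or a smooth surface. It remains to pin down the rank. Here I would use the hypothesis $I_2=\R[x,y,z]$ in the form $1\in I_2$: since $I_2$ is generated by $J$ together with the $2\times 2$ minors $m_{ij}$ of $Df$, there are polynomials $a,b_{ij}\in\R[x,y,z]$ with $1=a\,J+\sum b_{ij}\,m_{ij}$. Evaluating this identity at any $p\in J^{-1}(0)$ kills the term $a\,J$, and therefore forces $m_{ij}(p)\neq 0$ for at least one minor. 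Consequently $\rank[Df(p)]\geq 2$. On the other hand $J(p)=\det[Df(p)]=0$ forces $\rank[Df(p)]\leq 2$, so $\rank[Df(p)]=2$ exactly.

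Next I would record the equality $J^{-1}(0)=S_1(f)$. By definition $S_1(f)=\{p\mid\rank Df(p)=2\}$. The inclusion $J^{-1}(0)\subseteq S_1(f)$ is precisely the rank computation just made, while the reverse inclusion holds because $\rank Df(p)=2<3$ already implies $\det[Df(p)]=J(p)=0$. Combined with the smoothness from the previous step, this completes (i).

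Finally, (ii) is a direct consequence of (i). At any $p\in J^{-1}(0)$ we have $\rank[Df(p)]=2$, so two of the three rows of $Df(p)$ are linearly independent, say $\nabla f_i(p)$ and $\nabla f_j(p)$. Then their cross product $K_{ij}(p)=(\nabla f_i\times\nabla f_j)(p)\neq\bo$, since the three coordinates of $K_{ij}$ are exactly the $2\times 2$ minors of $Df$ formed from rows $i$ and $j$, and linear independence of those rows means one such minor is nonzero. This is the same observation used in the first half of the earlier lemma on vector products. The argument is almost entirely formal; the only point that warrants care is the bookkeeping that identifies the coordinates of $K_{ij}=\nabla f_i\times\nabla f_j$ with the $2\times 2$ minors of $Df$ in rows $i,j$, so that ``some $2\times 2$ minor is nonzero'' and ``some $K_{ij}\neq\bo$'' say the same thing. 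Once this is noted, both (i) and (ii) reduce to the standard fact that $1\in I$ prevents the generators of $I$ from vanishing simultaneously, so I do not expect any genuine obstacle here.
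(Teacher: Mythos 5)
Your proposal is correct and follows essentially the same route as the paper: Proposition \ref{kryt} gives smoothness of $J^{-1}(0)$ from $I_1=\R[x,y,z]$, and $1\in I_2$ forces some $2\times 2$ minor of $Df(p)$ to be nonzero at each $p\in J^{-1}(0)$, which simultaneously yields $\rank[Df(p)]=2$ and the nonvanishing of some $K_{ij}(p)$ (since the minors are exactly the coordinates of the $K_{ij}$). Your version merely spells out the details the paper leaves implicit (the identity $1=aJ+\sum b_{ij}m_{ij}$, the bound $\rank\leq 2$ from $J(p)=0$, and the two inclusions giving $J^{-1}(0)=S_1(f)$).
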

\begin{proof}  By Proposition \ref{kryt}, the set $J^{-1}(0)$ is a smooth surface. Take $p\in J^{-1}(0)$.
Since $I_2=\R[x,y,z]$, there is at least one non--zero $2\times 2$--minor of $Df(p)$. Hence $\operatorname{rank}[Df(p)]=2$,
i.e. $p\in S_1(f)$, and at least one vector field $K_{ij}$ does not vanish at $p$.\end{proof}
Let $I_3$ denote the ideal generated by
$J,X_{1,2},X_{1,3},X_{2,3}$ and all $2\times 2$ --minors of the derivative matrix $D(J,X_{1,2},X_{1,3},X_{2,3})$.
\begin{prop}\label{propozycja84}
Assume that $I_1=I_2=I_3=\R[x,y,z]$. Then
\item[(i)] $S_{1,1}(f)=J^{-1}(0)\cap\bigcap X_{ij}^{-1}(0)$ is either empty or a smooth curve,
\item[(ii)] the set of critical points of $f|S_{1,1}(f)$ equals $S_{1,1,1}(f)$.
\end{prop}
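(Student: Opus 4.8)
The plan is to localize the statement: at every point of the candidate set $J^{-1}(0)\cap\bigcap X_{ij}^{-1}(0)$ I would verify the two hypotheses of Remark \ref{remark2}---vanishing of the relevant $X_{ij}$ together with linear independence of $\nabla J$ and $\nabla X_{ij}$---and then read off smoothness directly from that remark, using the ideal condition $I_3=\R[x,y,z]$ to supply the independence uniformly. I would begin by invoking Proposition \ref{krytt}: under $I_1=I_2=\R[x,y,z]$ the set $J^{-1}(0)=S_1(f)$ is a smooth surface, $\rank[Df]=2$ along it, and at each $p\in J^{-1}(0)$ at least one $K_{ij}(p)\neq\bo$, so that $K_{ij}$ is an admissible kernel field near $p$ with $X_{ij}=\langle\nabla J,K_{ij}\rangle$.

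Next I would settle the set equality in (i). Fix $p\in S_1(f)$ and choose $(i,j)$ with $K_{ij}(p)\neq\bo$. Since $\rank[Df(p)]=2$, the kernel $\operatorname{Ker}Df(p)$ is one-dimensional and spanned by $K_{ij}(p)$, so every $K_{kl}(p)$ is a scalar multiple of it and hence $X_{kl}(p)=c_{kl}\,X_{ij}(p)$. By definition $p\in S_{1,1}(f)$ iff $f|S_1(f)$ is critical at $p$, i.e. iff $K_{ij}(p)\in T_pS_1(f)=(\nabla J(p))^{\perp}$, which is precisely $X_{ij}(p)=0$; by the proportionality this is equivalent to the simultaneous vanishing of all three $X_{kl}(p)$. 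The index $(i,j)$ depends on $p$ but this conclusion does not, so $S_{1,1}(f)=J^{-1}(0)\cap\bigcap X_{ij}^{-1}(0)$.

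The heart of the argument is the independence hypothesis, and here I would reuse the computation leading to Corollary \ref{wwniosek1}. Near $p$ the fields $K_{kl}$ and $K_{ij}$ are collinear along the complete intersection $J^{-1}(0)$, so $K_{kl}=\xi_{kl}K_{ij}+J\,L_{kl}$ for smooth $\xi_{kl},L_{kl}$, whence $X_{kl}=\xi_{kl}X_{ij}+J\,R^{(kl)}$, exactly the identity $X_1=\xi X+J R_1$ (with $\alpha\equiv 1$) obtained earlier. Differentiating and evaluating at $p$, where $J(p)=X_{ij}(p)=0$, gives $\nabla X_{kl}(p)=\xi_{kl}(p)\,\nabla X_{ij}(p)+R^{(kl)}(p)\,\nabla J(p)$. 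Thus every row of $D(J,X_{1,2},X_{1,3},X_{2,3})(p)$ lies in the span of $\nabla J(p)$ and $\nabla X_{ij}(p)$, so this $4\times 3$ matrix has rank at most $2$. On the other hand $1\in I_3$, so its generators cannot all vanish on $S_{1,1}(f)$; hence some $2\times 2$ minor is nonzero there and the rank is exactly $2$. As the row space is two-dimensional and spanned by $\nabla J(p),\nabla X_{ij}(p)$, these two vectors are linearly independent.

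With $X_{ij}(p)=0$ and $\nabla J(p),\nabla X_{ij}(p)$ independent, Remark \ref{remark2} applies at $p$ and shows that $S_{1,1}(f)$ is locally a smooth curve coinciding with $J^{-1}(0)\cap X_{ij}^{-1}(0)$; letting $p$ range over the set proves (i). Part (ii) is then immediate from Remark \ref{remark3}: once $S_{1,1}(f)$ is one-dimensional, the critical set of $f|S_{1,1}(f)$ is exactly $S_{1,1,1}(f)$. I expect the only genuine obstacle to be the rank bookkeeping of the third paragraph---extracting the \emph{local} independence required by Remark \ref{remark2} from the single \emph{global} condition $I_3=\R[x,y,z]$---for which the substitution $X_{kl}=\xi_{kl}X_{ij}+J\,R^{(kl)}$ is the key device, since it confines all the $\nabla X_{kl}(p)$ to a common two-dimensional subspace and thereby pins the rank at precisely $2$.
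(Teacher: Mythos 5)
Your proposal is correct and follows essentially the same route as the paper: reduce to a single kernel field $K_{ij}$ nonvanishing at $p$, use the Section 4 decomposition $X_{kl}=\xi_{kl}X_{ij}+J\,R^{(kl)}$ (the content of Corollary \ref{wwniosek1} and Proposition \ref{lemat2}(i)) to confine all $\nabla X_{kl}(p)$ to the span of $\nabla J(p),\nabla X_{ij}(p)$, then use $1\in I_3$ to force the rank of $D(J,X_{1,2},X_{1,3},X_{2,3})(p)$ to be exactly $2$, and conclude via Remarks \ref{remark2} and \ref{remark3}. The only cosmetic difference is that you re-derive the decomposition and the pointwise proportionality of the $X_{kl}$ directly instead of citing the corollary and proposition, which the paper does.
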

\begin{proof}
 Take any $p\in J^{-1}(0)$. One may assume that $K_{1,2}(p)\neq\bo$.
Every $K_{ij}\in \operatorname{Ker}Df$ and $\dim\operatorname{Ker}(Df)\equiv 1$ along $S_1(f)$.
Therefore we may apply results of Section 3 and  Section 4, where $K=K_{1,2}$, $K_1=K_{ij}$, $X_1=X_{ij}$, $\alpha\equiv 1$, so that $J_1=J$.
By Corollary \ref{wwniosek1},
$$J^{-1}(0)\cap X_{1,2}^{-1}(0)\subset J^{-1}(0)\cap\bigcap_{1\leq i<j\leq 3}X_{ij}^{-1}(0).$$
The opposite inclusion is obvious. Therefore
$J^{-1}(0)\cap\bigcap X_{ij}^{-1}(0)=J^{-1}(0)\cap X_{1,2}^{-1}(0)$ near $p\in J^{-1}(0)$. 

 Suppose  that $p\in J^{-1}(0)\cap\bigcap X_{ij}^{-1}(0)$, and $K_{1,2}(p)\neq\bo$.
As $I_3=\R[x,y,z]$, then $$\rank D(J,X_{1,2},X_{1,3},X_{2,3})(p)\geq 2.$$ 
According to Lemma \ref{lemat2}(i),
each $\nabla X_{ij}(p)$ is a linear combination of $\nabla J(p),\nabla X_{1,2}(p)$.
Hence $\rank D(J,X_{1,2},X_{1,3},X_{2,3})(p)=\rank D(J,X_{1,2})(p)=2$.
By Remark \ref{remark2}, $S_{1,1}(f)=J^{-1}(0)\cap X_{1,2}^{-1}(0)=J^{-1}(0)\cap\bigcap X_{ij}^{-1}(0)$ is a smooth curve
near any $p\in J^{-1}(0)$. 
By Remark \ref{remark3}, the set of critical points of $f|S_{1,1}(f)$ equals $S_{1,1,1}(f)$.
\end{proof}

Let $Y_{ij}=\langle\nabla X_{ij},K_{ij}\rangle$,
$H_{ij}=(J,X_{ij},Y_{ij})$, $Z_{ij}=\langle \nabla Y_{ij},K_{ij}\rangle$, and $g_{ij}=Z_{ij}\cdot\det[DH_{ij}]$.
Then  $J,X_{ij},Y_{ij},Z_{ij},g_{ij}\in\R[x,y,z]$.

Let $I$ denote the ideal in $\R[x,y,z]$ generated by $J$, all $X_{ij}$, and all $Y_{ij}$. Let $\Aa=\R[x,y,z]/I$.
Put $g=\sum \alpha_{ij}\cdot g_{ij}$, where all $\alpha_{ij}$ are non--negative, and at least one is positive.
If $\dim_{\R}\Aa<\infty$ then the quadratic forms $\Theta,\Psi$ on $\Aa$ given by
$$\Theta(h)=t(h^2)\ \ \ ,\ \ \ \Psi(h)=t(g\cdot h^2)$$
are defined, as well as their signatures.

\begin{theorem}\label{liczenie}
Assume that $I_1=I_2=I_3=\R[x,y,z]$ and $\dim_{\R}\Aa<\infty$. Then
$S_{1,1,1}(f)=V(I)$ is finite, and
$$\sigma(\Theta)=\#\, S_{1,1,1}(f)\ .$$

If the quadratic form $\Psi$ is non--degenerate, then all points in $S_{1,1,1}(f)$ are
simple swallowtails and
$$\sigma(\Psi)=\sum I(f,p),\ \mbox{where}\ p\in S_{1,1,1}(f).$$
If that is the case then
$$\# \{p\in S_{1,1,1}(f)\ |\ I(f,p)=+1\}=\left( \sigma(\Theta)+\sigma(\Psi)    \right)/2,$$
$$\# \{p\in S_{1,1,1}(f)\ |\ I(f,p)=-1\}=\left( \sigma(\Theta)-\sigma(\Psi)    \right)/2.$$
\end{theorem}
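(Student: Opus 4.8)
The plan is to reduce the whole statement to the trace formula (Theorem \ref{traceformula}), by first identifying $S_{1,1,1}(f)$ with the real variety $V(I)$ and then verifying that the polynomial $g$ is built precisely so that, at a point of $S_{1,1,1}(f)$, its sign detects whether that point is a simple swallowtail and, if so, equals $I(f,p)$.

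First I would identify $S_{1,1,1}(f)$ with $V(I)$. By Proposition \ref{propozycja84} the hypotheses $I_1=I_2=I_3=\R[x,y,z]$ make $S_{1,1}(f)=J^{-1}(0)\cap\bigcap X_{ij}^{-1}(0)$ a smooth curve with $S_{1,1,1}(f)$ as its critical set. Fixing $p\in S_{1,1}(f)$, Proposition \ref{krytt}(ii) gives a pair with $K_{i_0j_0}(p)\neq\bo$, which I take as base field; writing $K_{ij}=\xi_{ij}K_{i_0j_0}+J\,L_{ij}$ as in Section 4, the relation $Y_{ij}=\xi_{ij}^2 Y_{i_0j_0}+R_2 J+R_3 X_{i_0j_0}$ shows $Y_{ij}(p)=\xi_{ij}^2(p)Y_{i_0j_0}(p)$ at $p$ (where $J(p)=X_{i_0j_0}(p)=0$). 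Hence all $Y_{ij}(p)$ vanish iff $Y_{i_0j_0}(p)$ does, which by Remark \ref{remark3} is equivalent to $p\in S_{1,1,1}(f)$; thus $S_{1,1,1}(f)=V(I)$, which is finite since $\dim_\R\Aa<\infty$, and Theorem \ref{traceformula} gives $\sigma(\Theta)=\#V(I)=\#S_{1,1,1}(f)$.

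Next I would establish the core claim: for $p\in S_{1,1,1}(f)$, one has $g(p)\neq0$ iff $p$ is a simple swallowtail, and then $\sgn g(p)=I(f,p)$. Keeping $K_{i_0j_0}(p)\neq\bo$ (so $\xi_{i_0j_0}=1$) and noting $K_{ij}(p)=\bo\iff\xi_{ij}(p)=0$, I apply Proposition \ref{lemat2} with $\alpha\equiv1$, $\xi=\xi_{ij}$, which carries $H_{i_0j_0}$ to $H_{ij}$ and $g_{i_0j_0}$ to $g_{ij}$. If $p$ is a simple swallowtail, then $H_{i_0j_0}$ has a simple zero there (Corollary \ref{star}); part (ii) then gives $\sgn g_{ij}(p)=I(f,p)$ whenever $\xi_{ij}(p)\neq0$, while part (iii) gives $g_{ij}(p)=0$ when $\xi_{ij}(p)=0$. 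Since every $\alpha_{ij}\geq0$ the terms cannot cancel, so $\sgn g(p)\in\{0,I(f,p)\}$ and $g(p)\neq0$ forces $\sgn g(p)=I(f,p)$. Conversely, if $p$ is not a simple swallowtail, then $\det[DH_{i_0j_0}(p)]=0$, so $g_{i_0j_0}(p)=0$, and the same two parts of Proposition \ref{lemat2} force every $g_{ij}(p)=0$, hence $g(p)=0$.

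Finally, assuming $\Psi$ non-degenerate, Theorem \ref{traceformula} gives $g(p)\neq0$ at every $p\in V(I)=S_{1,1,1}(f)$, so the claim makes all such $p$ simple swallowtails with $\sgn g(p)=I(f,p)$; therefore $\sigma(\Psi)=\sum_{p\in V(I)}\sgn g(p)=\sum_p I(f,p)$, and since $\{g>0\}$ and $\{g<0\}$ meet $V(I)$ exactly in $\{I(f,\cdot)=+1\}$ and $\{I(f,\cdot)=-1\}$, the last two displays of Theorem \ref{traceformula} yield the counts $(\sigma(\Theta)\pm\sigma(\Psi))/2$. The hard part will be the converse half of the core claim: showing $g(p)=0$ at points of $S_{1,1,1}(f)$ that fail to be simple swallowtails, since the field $K_{ij}$ feeding each $g_{ij}$ may itself vanish at $p$. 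The bookkeeping of which pairs have $K_{ij}(p)=\bo$, together with the correct use of Proposition \ref{lemat2}(ii)--(iii) relative to one fixed nonvanishing base field, is the delicate point; the nonnegativity of the $\alpha_{ij}$ is exactly what blocks spurious cancellation and pins down the sign.
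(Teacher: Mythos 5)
Your proposal is correct and follows essentially the same route as the paper's own proof: identify $S_{1,1,1}(f)=V(I)$ via Propositions \ref{krytt} and \ref{propozycja84}, Remark \ref{remark3} and the relations of Section 4 (Corollary \ref{wwniosek1}), then combine Theorem \ref{traceformula} with Proposition \ref{lemat2} and Corollary \ref{star} to get the sign identification and the counts. One caution: the forward half of your stated \emph{iff} (simple swallowtail $\Rightarrow g(p)\neq 0$) can fail if the positive weights $\alpha_{ij}$ all sit on pairs with $K_{ij}(p)=\bo$, but since you neither prove nor use that direction --- your argument only needs that $g(p)\neq 0$ forces $p$ to be a simple swallowtail with $\sgn g(p)=I(f,p)$ --- the proof itself is unaffected.
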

\begin{proof}
Take $p\in S_{1,1}(f)$. By Proposition \ref{krytt} and \ref{propozycja84}, we may assume that $K_{1,2}(p)\neq\bo$ and $p\in J^{-1}(0)\cap\bigcap X_{ij}^{-1}(0)$.
By Remark \ref{remark3} , the mapping $f|S_{1,1}(f)$ has a critical point at $p$ if and only if $p\in J^{-1}(0)\cap X_{1,2}^{-1}(0)\cap Y_{1,2}^{-1}(0)$.
Applying Corollary \ref{wwniosek1} and similar arguments as in the proof of Proposition \ref{propozycja84},
one may show that $J^{-1}(0)\cap X_{1,2}^{-1}(0)\cap Y_{1,2}^{-1}(0)=   J^{-1}(0)\cap\bigcap X_{ij}^{-1}(0)\cap\bigcap Y_{ij}^{-1}(0)$ near $p$.

Hence the set $S_{1,1,1}(f)$ of critical points of $f|S_{1,1}(f)$ equals $V(I)$. As $\dim\Aa<\infty$, the set $V(I)$ is finite, and by Theorem \ref{traceformula}
$$\#S_{1,1,1}(f)=\# V(I)=\sigma(\Theta).$$

Suppose that $\Psi$ is non--degenerate. Take any $p\in V(I)$. By Theorem \ref{traceformula}, at least one $\alpha_{ij}\cdot g_{ij}=\alpha_{ij}\cdot Z_{ij}\cdot\det[DH_{ij}]$ does not vanish at $p$.
By Corollary \ref{star}, $p$ is a simple swallowtail. By Proposition \ref{lemat2}, each $g_{ij}(p)$ is either zero or its sign equals $I(f,p)$, so that
$g(p)$ has the same sign as $I(f,p)$. Thus
$$\sigma(\Psi)=\sum\sgn(g(p))=\sum I(f,p),$$
where $ p\in S_{1,1,1}(f)$.
The last assertion is obvious.
\end{proof}
\begin{ex} Let $f=(-x^2y+z,y^2+x,x^2yz+z^2+y)$. Applying methods presented in Theorem \ref{liczenie}, with the help of {\sc Singular} \cite{greueletal},
one may check that $I_1=I_2=I_3=\R[x,y,z]$, $\dim_{\R}\Aa=27$, $\sigma(\Theta)=1$, the quadratic form $\Psi$ is non-degenerate and
$\sigma(\Psi)=-1$. Hence $S_{1,1,1}(f)$ consists of one negative swallowtail.
\end{ex}
Take $u\in\R[x,y,z]$. Let $\Phi_1,\Phi_2$ denote quadratic forms on $\Aa$ given by
$$\Phi_1(h)=t(u\cdot h^2)\ \ \ ,\ \ \ \Phi_2(h)=t(u\cdot g\cdot h^2).$$
Applying the method for counting real points inside a real--algebraic constraint region presented in \cite{pedersenetal} one gets

\begin{theorem}\label{liczenie1}
Assume that $I_1=I_2=I_3=\R[x,y,z]$ and $\dim_{\R}\Aa<\infty$. 

If the quadratic forms $\Psi,\Phi_1,\Phi_2$ are non--degenerate, then all points in $S_{1,1,1}(f)$ are
simple swallowtails and
$$\# \{p\in S_{1,1,1}(f)\ |\ u(p)>0  \}=\left(    \sigma(\Theta)+\sigma(\Phi_1)      \right)/2,$$
$$\# \{p\in S_{1,1,1}(f)\ |\ I(f,p)=+1,\, u(p)>0\}=\left( \sigma(\Theta)+\sigma(\Psi) +\sigma(\Phi_1)+\sigma(\Phi_2)   \right)/4,$$
$$\# \{p\in S_{1,1,1}(f)\ |\ I(f,p)=-1,\, u(p)>0\}=\left( \sigma(\Theta)-\sigma(\Psi)+\sigma(\Phi_1)-\sigma(\Phi_2)    \right)/4.$$
\end{theorem}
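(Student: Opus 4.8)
The plan is to reduce the statement to four applications of the trace formula (Theorem~\ref{traceformula}) together with the sign correspondence already established in Theorem~\ref{liczenie}, and then to separate the four required counts by an elementary inclusion--exclusion on signs. First I would observe that since $\Psi$ is non-degenerate, Theorem~\ref{liczenie} already gives that $V(I)=S_{1,1,1}(f)$ is finite, that every $p\in V(I)$ is a simple swallowtail, and that $\sgn(g(p))=I(f,p)$ for each such $p$. Thus the entire problem reduces to bookkeeping over the finite set $V(I)$.

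Next I would apply Theorem~\ref{traceformula} to the four weight polynomials $1$, $g$, $u$, and $u\cdot g$, obtaining
$$\sigma(\Theta)=\#V(I),\quad \sigma(\Psi)=\sum_{p}\sgn g(p),\quad \sigma(\Phi_1)=\sum_{p}\sgn u(p),\quad \sigma(\Phi_2)=\sum_{p}\sgn(u(p)g(p)),$$
all sums being over $p\in V(I)$. The hypotheses that $\Psi$, $\Phi_1$, $\Phi_2$ are non-degenerate guarantee, respectively, that $g(p)\neq 0$, $u(p)\neq 0$ and $u(p)g(p)\neq 0$ at every $p\in V(I)$; in particular $\sgn(u(p)g(p))=\sgn u(p)\cdot\sgn g(p)$, so that $\sigma(\Phi_2)=\sum_{p}\sgn u(p)\cdot I(f,p)$.

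For the first displayed equality I would invoke the constraint-region count of \cite{pedersenetal}: from $\sigma(\Theta)=\sum_p 1$ and $\sigma(\Phi_1)=\sum_p\sgn u(p)$ one gets $\sigma(\Theta)+\sigma(\Phi_1)=2\cdot\#\{p\in V(I)\mid u(p)>0\}$. For the two refined counts I would attach to each $p$ the sign pair $(a,b)=(I(f,p),\sgn u(p))\in\{\pm 1\}^2$ and use the indicator identities $\tfrac14(1+a)(1+b)$ and $\tfrac14(1-a)(1+b)$ for the events $\{a=+1,\,b=+1\}$ and $\{a=-1,\,b=+1\}$. Summing $\tfrac14(1+a+b+ab)$ and $\tfrac14(1-a+b-ab)$ over $p\in V(I)$ and recognising the four partial sums as $\sigma(\Theta),\sigma(\Psi),\sigma(\Phi_1),\sigma(\Phi_2)$ produces exactly the second and third displayed formulas.

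Since the argument is a direct specialisation of the trace formula, I expect no serious obstacle. The only point requiring genuine care is the verification that non-degeneracy of the auxiliary forms forces their weight polynomials to be non-vanishing on $V(I)$, since this is precisely what makes the multiplicative identity $\sgn(ug)=\sgn u\cdot\sgn g$ valid pointwise and lets the $2\times 2$ inclusion--exclusion close up without leftover terms.
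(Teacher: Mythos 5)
Your proposal is correct and takes essentially the same route as the paper: the paper proves Theorem~\ref{liczenie1} simply by invoking the constraint-region counting method of \cite{pedersenetal} on top of Theorem~\ref{liczenie}, and your argument --- four applications of Theorem~\ref{traceformula} with weights $1$, $g$, $u$, $u\cdot g$, followed by the $\frac14(1\pm a)(1+b)$ inclusion--exclusion over the sign pairs $(I(f,p),\sgn u(p))$ --- is precisely that method written out in detail. Your identification of where each non-degeneracy hypothesis enters (forcing $g$, $u$, and $ug$ to be non-vanishing on $V(I)$) is exactly the point the paper leaves implicit in the citation.
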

\begin{ex} Let $ f=(-y-2z-xy-xz,-2x-2y+3xy+z^2,z+2y-x^2)$, and $u=9-x^2-y^2-z^2$.
 Applying methods presented in Theorem \ref{liczenie1}, with the help of {\sc Singular} \cite{greueletal},
one may check that $I_1=I_2=I_3=\R[x,y,z]$, $\dim_{\R}\Aa=7$, $\sigma(\Theta)=3$,  the quadratic forms $\Psi,\Phi_1,\Phi_2$ are non-degenerate and
$\sigma(\Psi)=1$, $\sigma(\Phi_1)=1$, $\sigma(\Phi_2)=3$. 

Hence $S_{1,1,1}(f)$ consists of three swallowtails, two of them are positive and one is negative. In the ball $\{u>0\}$ there are two positive swallowtails.
\end{ex}


Justyna BOBOWIK\\
Institute of Mathematics, University of Gda\'nsk\\
80-952 Gda\'nsk, Wita Stwosza 57, Poland\\
Justyna.Bobowik@mat.ug.edu.pl\\[1em]
Zbigniew SZAFRANIEC\\
Institute of Mathematics, University of Gda\'nsk\\
80-952 Gda\'nsk, Wita Stwosza 57, Poland\\
Zbigniew.Szafraniec@mat.ug.edu.pl\\

\end{document}